\pgfplotsset{compat=1.15}
\newcommand{\R}{\mathbb{R}}
\newcommand{\rn}{\mathbb{R}^n}
\newcommand{\N}{\mathbb{N}}
\renewcommand{\d}{{\fam0 d}}
\newcommand{\sprt}{\operatorname{supp}}
\newcommand{\esssup}{\operatornamewithlimits{ess\,sup}}
\newcommand{\essinf}{\operatornamewithlimits{ess\,inf}}
\newcommand{\nk}{\frac nk}
\newcommand{\kn}{\frac kn}
\newcommand{\ir}{\frac{1}{r}}
\newcommand{\ip}{\frac{1}{p}}
\newcommand{\rpar}{\varrho}
\newcommand{\multiindex}{\beta}
\newcommand{\bmc}{\alpha}
\let\tilde\widetilde
\DeclarePairedDelimiter\abs{\lvert}{\rvert}
\DeclarePairedDelimiter\nrm{\lVert}{\rVert}
\DeclareRobustCommand\onedot{\futurelet\@let@token\@onedot}
\def\@onedot{\ifx\@let@token.\else.\null\fi\xspace}
\def\eg{e.g\onedot}
\def\ie{i.e\onedot}
\def\cf{cf\onedot}
\newtheoremstyle{MyPlain}{}{}{\itshape}{}{\bfseries}{.}{5pt plus 4pt minus 3pt}{\thmname{#1}\thmnumber{ #2}\thmnote{ \textbf{[#3]}}}
\theoremstyle{MyPlain}
\newtheorem{theorem}{Theorem}[section]
\newtheorem{proposition}[theorem]{Proposition}
\newtheorem{question}{Question}
\newtheoremstyle{MyRemark}{}{}{\upshape}{}{\bfseries}{.}{5pt plus 1pt minus 1pt}{}
\theoremstyle{MyRemark}
\newtheorem*{definition}{Definition}
\newtheorem*{example}{Example}
\newtheorem{remark}[theorem]{Remark}
\numberwithin{equation}{section}
\let\expandafter\oldproof\csname\string\proof\endcsname
\let\oldendproof\endproof
\renewenvironment{proof}[1][\proofname]{%
  \oldproof[{{\bf #1.}}]%
}{\oldendproof}
\def\paragraph{\bigskip\@startsection{paragraph}{4}%
  \z@\z@{-\fontdimen2\font}%
  {\normalfont\bfseries}}
\begin{document}

\title{Maximal non-compactness of Sobolev embeddings}

\begin{abstract}
It has been known that sharp Sobolev embeddings into weak Lebesgue spaces are
non-compact but the question of whether the measure of non-compactness of such an
embedding equals to its operator norm constituted a well-known open problem.
The existing theory suggested an argument that would possibly solve the problem
should the target norms be disjointly superadditive, but the question of
disjoint superadditivity of spaces $L^{p,\infty}$ has been open, too. In this
paper, we solve both these problems. We first show that weak Lebesgue spaces are
never disjointly superadditive, so the suggested technique is ruled out. But
then we show that, perhaps somewhat surprisingly, the measure of non-compactness
of a sharp Sobolev embedding coincides with the embedding norm nevertheless, at
least as long as $p<\infty$. Finally, we show that if the target space is
$L^{\infty}$ (which formally is also a weak Lebesgue space with $p=\infty$),
then the things are essentially different. To give a comprehensive answer
including this case, too, we develop a new method based on a rather
unexpected combinatorial argument and prove thereby a general
principle, whose special case implies that the measure of non-compactness, in
this case, is strictly less than its norm. We develop a technique that
enables us to evaluate this measure of non-compactness exactly.
\end{abstract}

\author{Jan Lang\textsuperscript{1}}
\address{\textsuperscript{1}Department of Mathematics,
The Ohio State University,
231 West 18th Avenue,
Columbus, OH 43210-1174}

\author{V\'\i t Musil\textsuperscript{2,3}}
\address{\textsuperscript{2}Dipartimento di Matematica e Informatica ``Ulisse Dini'',
University of Florence,
Viale Morgagni 67/A, 50134
Firenze,
Italy}
\address{\textsuperscript{3}Institute of Mathematics,
Czech Academy of Sciences,
\v Zitn\' a 25,
115~67, Prague~1,
Czech Republic}

\author{Miroslav Ol\v s\'ak\textsuperscript{4}}
\address{\textsuperscript{4}Department of Computer Science,
Technikerstrasse 21a,
A-6020 Innsbruck,
Austria}

\author{Lubo\v s Pick\textsuperscript{5}}
\address{\textsuperscript{5}Department of Mathematical Analysis,
Faculty of Mathematics and Physics,
Charles University,
So\-ko\-lo\-vsk\'a~83,
186~75 Praha~8,
Czech Republic}

\urladdr{
	0000-0003-1582-7273 {\rm(Lang)},
	0000-0002-9361-1921 {\rm(Ol\v s\'ak)},
	0000-0001-6083-227X {\rm(Musil)},\hfill\break
	0000-0002-3584-1454 {\rm(Pick)}
}
\email{pick@karlin.mff.cuni.cz}

\subjclass[2020]{46E35,47B06}
\keywords{%
Ball measure of non-compactness, maximal non-compactness, Sobolev embedding, Weak Lebesgue
spaces}

\maketitle

\bibliographystyle{abbrv_doi}

\section*{How to cite this paper}
\noindent
This paper has been accepted for publication in \emph{The Journal of Geometric
Analysis} and is available~on
\begin{center}
	\url{https://doi.org/10.1007/s12220-020-00522-y}.
\end{center}
Should you wish to cite this paper, the authors would like to cordially ask you
to cite it appropriately.

\section{Introduction}

\noindent
Given a linear mapping acting between two (quasi)normed linear spaces, one of
the most important questions is whether it is compact. Compactness is often
desired or even indispensable for specific applications in different areas of
mathematics. It plays an important role in theoretical parts of functional
analysis such as, for instance, in the proof of the Schauder fixed point
theorem, and also in the most customary applications of functional analysis
such as proving existence, uniqueness, and regularity of solutions to partial
differential equations via compact embeddings of Sobolev-type spaces into
various other function spaces. However, more often than not, the mapping in
question is not~compact.

For a non-compact mapping, more subtle techniques have to be developed. For
example, in 1955, G. Darbo \citep{Dar:55} extended the Schauder theorem to
certain types of non-compact operators. The main tool she used was the measure
of non-compactness, which had been introduced earlier by Kuratowski
in~\citep{Kur:30} in connection with different problems in general topology. An
important generalization was later added by Sadovskii \citep{Sad:68}. Later
still, the notion of the measure of non-compactness proved to be very useful
for various applications, and a formidable theory was developed. See
\eg~\citep{Tri:78,Pie:07} for a general survey and more references.

The concept of measure of non-compactness of a mapping is a~good device for
quantifying \emph{how bad} the non-compactness is, or, perhaps, \emph{how
far} from the class of compact maps the given operator lies. Let us recall
its definition here.
Throughout the paper, $B_X$ denotes the open unit ball in $X$ centered at the origin.
\begin{definition}
Let $X$ and $Y$ be (quasi)normed linear spaces and let $T$ be a~bounded mapping
defined on $X$ and taking values in $Y$, a fact we will denote by $T\colon X\to
Y$. The \emph{ball measure of non-compactness} $\bmc(T)$ of $T$ is defined
as the infimum of radii $\rpar>0$ for which there exists a finite set
of balls in $Y$ of radii $\rpar$ that covers $T(B_X)$.
\end{definition}

There are other examples of measures of non-compactness, for instance, the
Kuratowski measure of non-compactness, which is defined analogously but with
balls replaced by arbitrary sets of diameter not exceeding $\rpar$. In our
analysis, we focus on the ball measure of non-compactness even if we sometimes
avoid the adjective ``ball''.

The measure of non-compactness is an important geometric feature of images of
bounded sets under an operator, see \eg~\citep{BG:80}. It is intimately
connected for instance to the classical entropy numbers or certain types of the
so-called $s$-numbers, see \eg~\citep{EdmTri:96,Pie:07,EdmEv:18}. Its
importance stems among other reasons from Carl's inequality
\citep{CarTri:80,Car:81} which establishes its relationship to eigenvalues of an
operator. The measure of non-compactness is also related to the essential spectrum
of a bounded map \citep{EdmEv:18}.

From the definition of the measure of non-compactness, we easily observe that
\begin{equation*}
    0 \le \bmc(T) \le \nrm{T},
\end{equation*}
where $\nrm{T}$ denotes the norm of the operator $T$ considered as a map from
$X$ to $Y$. Then $T$ is compact if and only if $\bmc(T)=0$, and it is as
non-compact as possible if $\bmc(T)=\nrm{T}$. In the latter case, we say
that $T$ is \emph{maximally non-compact}.

\begin{example}
A simple example of a maximally non-compact operator is the embedding of
sequence spaces
\begin{equation*}
	I\colon\ell^p\to\ell^q
		\quad\text{for $1\le p\le q<\infty$,}
\end{equation*}
where $I$ is the identity (or the \emph{embedding operator}).  Indeed, we
obviously have $\nrm{I}=1$. Suppose thus that $\bmc(I)<1$ and fix
$\rpar$ such that $\bmc(I)<\rpar<1$. Then there is some $m\in\N$ and elements
$y^{1},\dots,y^{m}$ of $\ell^{q}$ such that
\begin{equation*}
	B_{\ell^{p}}\subset\bigcup\nolimits_{k=1}^{m}
		\bigl(y^{k}+\rpar B_{\ell^{q}}\bigr).
\end{equation*}
Since
all $y^{k}$'s belong to $\ell^{q}$, they are also elements of $c_0$. Hence
there is a $j\in\N$ such that $(y^{k})_j<1-\rpar$ for each $k=1,\dots,m$. But
then the vector $e^j=(0,\dots,0,1,0,\dots)$ with one on the $j$-th position
belongs to $B_{\ell^p}$ and does not belong to any of the balls $y^k+\rpar B_{\ell^q}$,
which is a contradiction. Consequently, $\bmc(I)=1$ and $I$ is
maximally non-compact.

Interestingly, the situation is dramatically different when the target space is
$\ell^{\infty}$. Then, for any fixed $p\in[1,\infty)$, the norm of
$I\colon\ell^p\to\ell^{\infty}$ is again equal to 1, but $I$ is no longer maximally
non-compact. More precisely, we will demonstrate that $\bmc(I) \le 2^{-1/p}$.
To this end, denote $\sigma=2^{1-1/p}$, fix $\rpar>\sigma/2$, and consider
$m\in\N$ such that
\begin{equation} \label{E:m-def}
	\Bigl( 1+\frac{1}{m} \Bigr)\frac{\sigma}{2} < \rpar.
\end{equation}
Define $\lambda_k = \tfrac{\sigma k}{2m}$ for $k=-m,\dots,m$
and let $y^k$ be the constant sequence defined by $(y^{k})_j=\lambda_k$ for
every $j\in\N$ and $k=-m,\dots,m$.
We show that
\begin{equation} \label{E:Blp}
	B_{\ell^p} \subset \bigcup\nolimits_{k=-m}^{m}
		\bigl( y^k + \rpar B_{\ell^\infty} \bigr),
\end{equation}
proving $\bmc(I) \le \rpar$. Assume that $y\in B_{\ell^{p}}$. Then $y\in B_{\ell^\infty}$
and $\abs{y_j}\le 1 \le \sigma$ for every $j\in\N$.
We claim that
\begin{equation} \label{E:sup-inf-example}
	\sup y - \inf y\le\sigma.
\end{equation}
Indeed,
given $\varepsilon>0$, we find $s,i\in\N$ such that $y_s>\sup y-\varepsilon$
and $y_i<\inf y+\varepsilon$. We get
\begin{equation*}
	1 \ge \nrm{y}_{\ell^{p}}
		\ge \left(\abs{y_s}^p+\abs{y_i}^p\right)^{\frac{1}{p}}
		\ge 2^{\frac{1}{p}-1}(\abs{y_s}+\abs{y_i})
		> \frac{1}{\sigma}(\sup y-\inf y - 2\varepsilon),
\end{equation*}
and the claim follows on sending $\varepsilon\to 0_+$.
Now, if $\inf y = -\sigma$, then $y_j\in[\sigma,0]$ for
each $j\in\N$ and $y\in y^{-m}+\rpar B_{\ell^\infty}$.
If $\inf y\in (-\sigma,0]$ instead, then there is a unique $k\in\{-m+1,\ldots,m\}$
such that $\inf y + \sigma/2\in(\lambda_{k-1},\lambda_k]$.
Then, by the choice of $\rpar$ and inequality \eqref{E:sup-inf-example},
\begin{equation*}
	\lambda_k + \rpar
		> \lambda_k + \frac{\sigma}{2}
		\ge \inf y + \sigma
		\ge \sup y.
\end{equation*}
On the other hand, using the definition of $\lambda_{k}$ and \eqref{E:m-def},
\begin{equation*}
	\inf y
		> \lambda_{k-1} - \frac{\sigma}{2}
		= \lambda_k - \frac{\sigma}{2}\Bigl( 1+\frac{1}{m} \Bigr)
		> \lambda_k - \rpar.
\end{equation*}
Altogether, $y\in y^k+\rpar B_{\ell^{\infty}}$, and \eqref{E:Blp} follows.

We showed that $\bmc(I)\le\sigma/2=2^{-1/p}<1$,
whence $I$ is not maximally non-compact. It is not difficult
to prove that $\bmc(I)$ is actually equal to $2^{-1/p}$
(using similar reasoning as in the proof of~Theorem~\ref{T:beta-of-emb-limiting}), but that is beside the
point here. This example is simple (and likely to be known, although we did
not find it in the literature in this exact form),
but it is a good illustration of much more involved attractions below.
\end{example}

An important operator to which the theory is often applied is the identity
acting from a Sobolev space into another function space. Such an identity is also called
a~\emph{Sobolev embedding}.
Let $n\in\N$ and let $\Omega$ be an open, bounded and
nonempty set in $\rn$.
Recall that the Sobolev space $V^k_0X(\Omega)$ for $k\in\N$ is defined
as a collection of all measurable functions $u\colon\Omega\to\R$ whose extension by zero
outside $\Omega$ is $k$-times weakly differentiable and $\abs{\nabla^k u}\in X(\Omega)$.
Here $\nabla^k u = (D^\multiindex u)_{\abs{\multiindex}=k}$ is the vector
of all the derivatives of $u$ of order $k$, where, for an $n$-dimensional multiindex $\multiindex$, $D^\multiindex$ denotes
$\partial^\multiindex/\partial x^\multiindex$.
Once equipped with the norm
\begin{equation*}
	\nrm{u}_{V^{k}_0X(\Omega)}
		= \sum_{\abs{\multiindex}=k} \nrm{D^\multiindex u}_{X(\Omega)},
\end{equation*}
the Sobolev space $V^k_0X(\Omega)$ forms a Banach space.
If $X$ represents a classical Lebesgue space $L^p$
for some $p\in[1,\infty]$, we simply write $V^{k,p}_0(\Omega)$.

The compactness of a~Sobolev embedding can constitute a crucial step in many
applications in partial differential equations, probability theory, calculus of
variations, mathematical physics and other disciplines and therefore it has
been widely studied alongside with quantification of its absence when
appropriate. To name just one of many interesting connections, let us recall
that spectral properties of the Laplacian are governed by the measure of
non-compactness of a~Sobolev embedding~\citep{EdmEv:18,EvHar:87}.

The variational approach to partial differential equations with singular
coefficients often requires the use of an embedding of a Sobolev space into a
two-parameter Lorentz space. Compactness properties of such embeddings are
crucial under various circumstances
\citep{Lio1:84,Lio2:84,Lio1:85,Lio2:85,Str:84,Sol:95,JaSo:99}. Of particular
interest is an embedding into a Lorentz space whose second index is equal to infinity, equivalent to a weak Lebesgue space, see~\citep{CaRuTa:13}.
However, it is a rule of thumb that if a~Sobolev embedding is sharp in the
sense of function spaces, then it is never
compact~\citep{Sol:95,T4,Sla:15,CaMi:19}. It is thus of interest to study how
bad is its non-compactness.

\medskip
Our main goal in this paper is to study maximal non-compactness of general
Sobolev embeddings with emphasis on embeddings involving Lorentz--Sobolev
spaces. It is worth noticing that classical Sobolev embeddings built upon
Lebesgue spaces are included as particular instances.

The classical Sobolev embedding theorem (\cf \eg~\citep{Ada:75, AF:03, Maz:85})
asserts that if $n,k\in\N$, $k<n$, $\Omega$ is an open set in $\rn$,
$p\in[1,\nk)$ and $p^*=\frac{np}{n-kp}$, then one has
\begin{equation}\label{E:classical-sobolev}
    I\colon V^{k,p}_0(\Omega)\to L^{p^*}(\Omega),
\end{equation}
where $I$ is the identity operator. The Sobolev space on the left-hand side,
$V_0^{k,p}(\Omega)$, consists of functions of highest regularity ($k$) and
relatively small integrability ($p$) while, on the right-most side, the degree
of integrability is increased (note that $p^*>p$), balancing the loss of
regularity. This example explains the great importance of Sobolev embeddings:
it is an appropriate tool for ``trading regularity for integrability''.

Like the Lebesgue $L^p$ norm, the two-parameter Lorentz $L^{p,q}$ norm (or
quasinorm, in general) captures the $p$-integrability of a function whereas the
parameter $q$ measures how ``spread out'' the mass of the function is. This
extra index $q$ thus provides a fine-tuning of the $L^p$ space with the
property that $L^{p,q_1}\subseteq L^{p,q_2}$ if $q_1\le q_2$.  Setting $q=p$,
we recover the Lebesgue space $L^{p,p}=L^p$ to reveal that the Lorentz spaces
refine the Lebesgue scale. For these and other details concerning Lorentz
spaces, see \eg~\citep{BS:88,Pic:13}.

Having Lorentz spaces at hand, the classical Sobolev embedding
\eqref{E:classical-sobolev} can be enhanced to
\begin{equation}\label{E:classical-sobolev-lorentz}
    I\colon V^{k,p}_0(\Omega)\to L^{p^*,p}(\Omega),
\end{equation}
in which the Lebesgue target space $L^{p^*}(\Omega)$ is replaced with the
essentially smaller Lorentz space $L^{p^*,p}(\Omega)$ making thus the embedding
stronger.
The latter embedding is known to be
optimal in the sense that the target space cannot be replaced by any smaller
rearrangement-invariant Banach function space~\citep{EKP,T2}. However, none of
the embeddings~\eqref{E:classical-sobolev}, \eqref{E:classical-sobolev-lorentz}
is compact, and, as recent advances show \citep{T4,Sla:15}, even when the target space is
replaced by a considerably larger space $L^{p^*,\infty}(\Omega)$, the resulting embedding
\begin{equation} \label{E:limiting-embedding}
    I\colon V^{k,p}_0(\Omega)\to L^{p^*,\infty}(\Omega)
\end{equation}
is still not compact. In~\citep{Hen:03} it is shown that the Sobolev
embedding~\eqref{E:classical-sobolev} is maximally non-compact.
In~\citep{Bou:19}, this result is extended to more general Sobolev embeddings
of the form
\begin{equation*}
	I\colon V^{k}_0L^{p,q}(\Omega)\to L^{p^*,s}(\Omega)
		\quad \text{for every $1\leq q\leq s<\infty$}.
\end{equation*}
These results leave open the case when $s=\infty$.
Thus a natural question arises.

\begin{question} \label{Q:1}
Given $k,n\in\N$, $k<n$, and $p\in[1,\tfrac nk)$,
is~\eqref{E:limiting-embedding} maximally non-compact?
\end{question}
The key feature of the
approach of both~\citep{Hen:03} and~\citep{Bou:19} is the fact that any Lebesgue
space $L^p(\Omega)$ for $p\in[1,\infty)$, as well as any Lorentz space
$L^{p,q}(\Omega)$ for $p,q\in[1,\infty)$, is disjointly superadditive.

\begin{definition}
We say that a (quasi)normed linear space $X(\Omega)$ containing
functions defined on $\Omega$ is \emph{disjointly superadditive} if there
exist $\gamma>0$ and $C>0$ such that for every $m\in\N$ and
every finite sequence of functions $\{f_k\}_{k=1}^m$ with pairwise disjoint supports in
$\Omega$ one has
\begin{equation*}
	\sum\nolimits_{k=1}^{m} \nrm{f_k}_{X(\Omega)}^\gamma
		\le C \nrm*{\sum\nolimits_{k=1}^{m} f_k}_{X(\Omega)}^\gamma.
\end{equation*}
\end{definition}
In order to answer Question~\ref{Q:1}, one should first investigate the
following closely related mystery.

\begin{question} \label{Q:2}
Is the space $L^{p^*,\infty}(\Omega)$ disjointly superadditive?
\end{question}

The reason for considering Question~\ref{Q:2} is that if the answer to it was
positive, then it would be very likely that using some not-so-difficult
modification of techniques of~\citep{Hen:03} and~\citep{Bou:19} one should be
able to prove that the answer to Question~\ref{Q:1} is affirmative, too.
However, it turns out
(Theorem~\ref{T:weak-lebesgue-non-disjointly-superadditive}) that the answer to
Question~\ref{Q:2} is negative, that is, the space $L^{p^*,\infty}(\Omega)$ is
not disjointly superadditive.

This result is interesting on its own, but it leaves us shorthanded as far as
Question~\ref{Q:1} is concerned. So, in order to answer it we have to develop
new techniques.  In Section~\ref{S:maximal-non-compactness} we prove that the
answer to Question~\ref{Q:1} is positive, even though the methods
of~\citep{Hen:03} and~\citep{Bou:19} do not apply. More precisely, we in fact
show that a slightly more general embedding than~\eqref{E:limiting-embedding},
namely
\begin{equation*}
	I\colon V^{k}_0L^{p,q}(\Omega)\to L^{p^*,\infty}(\Omega),
\end{equation*}
is maximally non-compact for every $k,n\in\N$, $k<n$, $p\in[1,\nk)$ and
$q\in[1,\infty]$. We obtain this result (Theorem~\ref{T:max-non-compact}) as
a~consequence of a~fairly comprehensive principle
(Theorem~\ref{T:max-non-compact-general}) which postulates maximal
non-compactness of embeddings into weak Lebesgue spaces provided that the
underlying identity operator has certain shrinking property, which roughly
states that its norm over an open set $\Omega$ is attained at functions having
their supports restricted to any (arbitrarily small) open subset of $\Omega$.
In order to be able to apply this theory to our purposes, we need to know that
the identity operator in~\eqref{E:limiting-embedding} has shrinking property.
We establish this fact in Proposition~\ref{P:shrinking-sobolev}.

The techniques of Section~\ref{S:maximal-non-compactness} do not work for the
case when the target space is $L^{\infty}(\Omega)$ although it is also a
weak Lebesgue space. It was shown in~\citep{St:81} that a proper domain partner
for a Sobolev embedding into $L^{\infty}(\Omega)$ is the Lorentz space
$L^{\nk,1}(\Omega)$. More precisely, we have
\begin{equation} \label{E:emb-limiting}
    I\colon V^{k}_0L^{\nk,1}(\Omega)\to L^{\infty}(\Omega).
\end{equation}
There is thus one more natural, and still unanswered, problem.

\begin{question} \label{Q:3}
Given $k,n\in\N$, $k\le n$, is the Sobolev embedding~\eqref{E:emb-limiting}
maximally non-compact?
\end{question}

Let us first recall that in the very special (one-dimensional) case when
$n=k=1$, the answer is known. In the one-dimensional setting, $\Omega$ is
replaced by a compact interval $[a,b]$ and $L^{\infty}(\Omega)$ by $C(a,b)$,
the space of all continuous functions on $[a,b]$ endowed with the
$L^{\infty}$-norm. It is shown in~\citep{Hen:03} and~\citep{Bou:19}
that~\eqref{E:emb-limiting} is not maximally non-compact in this case. However,
when $n=k=1$, the Lorentz space $L^{\nk,1}(\Omega)$ on the domain position
collapses to the Lebesgue space $L^{1}(\Omega)$ and things get much simpler.

The question of extending this result to the higher-dimensional and
higher-order case has been one of the notoriously difficult open problems in
the theory. In Section~\ref{S:embeddings-into-l-infty}, we solve this problem
with the help of a new method which we develop for this purpose. At the end, we
show that the answer to Question~\ref{Q:3} is negative
(Theorem~\ref{T:beta-of-emb-limiting}), and we obtain this fact as a
consequence of a generic quantitative statement
(Theorem~\ref{T:beta-lower-of-X-to-ell-infty}) which works for a wide variety
of operators.

The results contained in Theorems~\ref{T:beta-lower-of-X-to-ell-infty}
and~\ref{T:beta-of-emb-limiting} are interesting for at least two reasons.
First, it is striking that the situation is so drastically different from any
other embedding into a~weak Lebesgue space though it accords with the example
mentioned in the introduction. The second is the innovative method of proof of
Theorem~\ref{T:beta-lower-of-X-to-ell-infty} based on a combinatorial argument
involving a~coloring-type problem, see Figure~\ref{fig:coloring}.  Such line of
argumentation is rarely seen in the area of mathematical analysis.

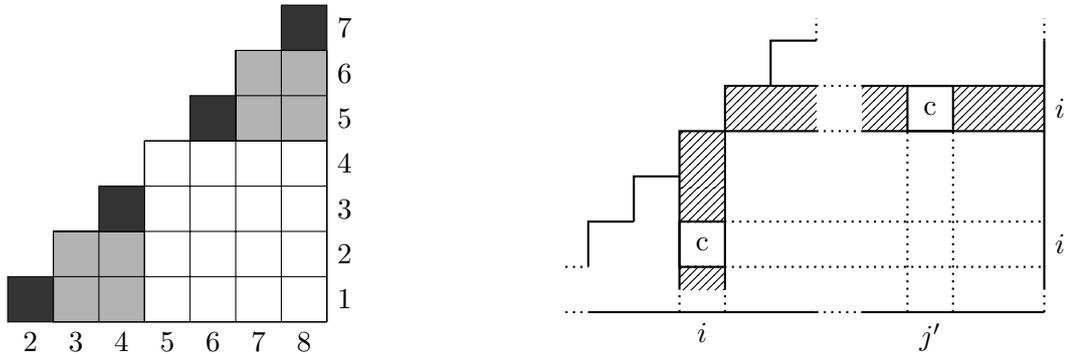
\begin{figure}[ht]
\centering
  \begin{subfigure}[t]{0.46\linewidth}
		\centering
		\begin{tikzpicture}[scale=0.6]
			\draw (3,0) grid + (4,4) rectangle (3,0);
			\foreach\x in {0,4}
				\draw[fill=black!30] (\x+1,\x) grid + (2,2) rectangle (\x+1,\x);
			\foreach\x in {0,2,4,6}
				\draw[fill=black!80] (\x,\x) rectangle +(1,1);
			\foreach\x in {2,...,8}
				\node[below] at (\x-1.5,0) {\x};
			\foreach\x in {1,...,7}
				\node[right] at (7,\x-0.5) {\x};
		\end{tikzpicture}
		\caption{An example of a valid coloring with $m$ colors. Triangle of side
		length $2^m-1$ consists of two smaller triangles of side length $2^{m-1}-1$
		and of a square of length $2^{m-1}$. Pick a new color for the square and
		proceed inductively on smaller triangles.}
		\label{fig:coloring:sufficiency}
	\end{subfigure}
	\hfil
  \begin{subfigure}[t]{0.46\linewidth}
		\centering
		\begin{tikzpicture}[scale=0.6, thick]
			\draw[dotted] (1.5,1) -- (2,1);
			\draw (2,1) -- (7,1);
			\draw[dotted] (7,1) -- (8,1);
			\draw (8,1) -- (12,1);
			\draw[dotted] (12,1) -- (12,1.5);
			\draw (12,1.5) -- (12,7);
			\draw[dotted] (12,7) -- (12,7.5);
			\foreach\x in {2,...,6}
					\draw (\x,\x)--(\x,\x+1)--(\x+1,\x+1);
			\draw[dotted] (1.5,2) -- (2,2);
			\draw[dotted] (7,7) -- (7,7.5);
			\draw (4,2) rectangle + (1,1) node[midway] {c};
			\draw (9,5) rectangle + (1,1) node[midway] {c};
			\fill[pattern=north east lines] (4,3) rectangle + (1,2);
			\fill[pattern=north east lines] (4,1.5) rectangle + (1,0.5);
			\fill[pattern=north east lines] (5,5) rectangle + (2,1);
			\fill[pattern=north east lines] (8,5) rectangle + (1,1);
			\fill[pattern=north east lines] (10,5) rectangle + (2,1);
			\node[right] at (12,5.5) {$i$};
			\draw[dotted] (4,1) -- (4,1.5);
			\draw[dotted] (5,1) -- (5,1.5);
			\node[below] at (4.5,1) {$i$};
			\draw (4,1.5) -- (4,4);
			\draw (5,1.5) -- (5,5) -- (7,5);
			\draw[dotted] (7,5) -- (8,5);
			\draw (8,5) -- (12,5);
			\draw (6,6) -- (7,6);
			\draw[dotted] (7,6) -- (8,6);
			\draw (8,6) -- (12,6);

			\draw[dotted] (4,2) -- (12,2);
			\draw[dotted] (4,3) -- (12,3);
			\node[right] at (12,2.5) {$i$};
			\draw[dotted] (9,1) -- (9,5);
			\draw[dotted] (10,1) -- (10,5);
			\node[below] at (9.5,1) {$j'$};
		\end{tikzpicture}
		\caption{Sets of colors of $i$-th and $j$-th row ($C_i$ and $C_j$, resp.)
		differ. The color $c\in C_i$ of the element $w_{i,j}$ cannot appear in $C_j$,
		since otherwise $j$-th row and $j$-th column would share this color.
		To achieve $2^m-1$ distinct (nonempty) sets, at least $m$ are needed.}
		\label{fig:coloring:necessity}
	\end{subfigure}
\caption{Triangle coloring problem: Color a grid triangle of side length
$2^m-1$ provided that $j$-th line and $j$-th column do not share a color for
every $j=1,\dots,2^{m}-1$. Show that at least $m$ colors are needed.}
\label{fig:coloring}
	\end{figure}

\section{Lack of disjoint superadditivity of weak Lebesgue spaces}

This section is devoted to the analysis of Question~\ref{Q:2}.
We recall some definitions and fix the notation first.

\paragraph{Rearrangements}
For a~measurable function $u\colon\Omega\to\R$, its nonincreasing
rearrangement, $u^*\colon[0,\infty)\to[0,\infty]$, is defined by
\begin{equation*}
	u^*(t) = \inf\{\lambda>0:\abs{\{x\in\Omega: \abs{u(x)}>\lambda\}}\leq t\}
		\quad\text{for $t\in[0,\infty)$}.
\end{equation*}
The absolute values $|\cdot|$ denote the Lebesgue measure.
The {maximal nonincreasing rearrangement} of $u$, namely the function
$u^{**}\colon(0,\infty)\to[0,\infty]$, is defined by
\begin{equation*}
	u^{**}(t) = \frac{1}{t}\int_0^tu^*(s)\,\d s
		\quad\text{for $t\in(0,\infty)$}.
\end{equation*}
An alternative formula for $u^{**}$ reads as
\begin{equation} \label{E:maximal-alt}
	u^{**}(t) = \frac{1}{t} \sup \int_E \abs{u(x)}\,\d x
		\quad\text{for $t\in(0,\abs\Omega]$,}
\end{equation}
where the supremum is taken over all measurable sets $E\subseteq\Omega$ such that
$\abs{E}=t$, see \eg~\citep[Proposition~7.4.5]{Pic:13}.
Note that the maximal nonincreasing rearrangement is subadditive, that is, given measurable functions $u,v\colon\Omega\to\R$, we have
\begin{equation} \label{E:f**-subadditivity}
	(u+v)^{**}(t) \le u^{**}(t) + v^{**}(t)
		\quad\text{for $t\in(0,\infty)$},
\end{equation}
while for the nonincreasing rearrangement we have only
\begin{equation} \label{E:f*-subadditivity}
	(u+v)^{*}(s+t) \le u^{*}(s) + v^{*}(t)
		\quad\text{for $s,t\in(0,\infty)$}.
\end{equation}

\paragraph{Lorentz spaces}
Given $0<p,q\le\infty$, the functional
$\|\cdot\|_{L^{p,q}(\Omega)}$ is defined by
\begin{equation} \label{E:Lpq-def}
	\nrm{u}_{L^{p,q}(\Omega)}
		= \nrm*{s^{\frac{1}{p}-\frac{1}{q}}u^*(s)}_{L^q(0,\abs{\Omega})}
\end{equation}
for a~measurable function $u\colon\Omega\to\R$.
We adopt the notation that $1/\infty=0$.
If either $1<p<\infty$ and
$1\leq q\leq\infty$, or $p=q=1$, or $p=q=\infty$, then
$\|\cdot\|_{L^{p,q}(\Omega)}$ is equivalent to a~norm (\cf \eg~\citep{BS:88}
for details), in other cases it is a~quasinorm. We further define the
functional $\|\cdot\|_{L^{(p,q)}(\Omega)}$ as
\begin{equation} \label{E:L(pq)-def}
	\nrm{u}_{L^{(p,q)}(\Omega)}
		= \nrm*{s^{\frac{1}{p}-\frac{1}{q}}u^{**}(s)}_{L^q(0,\abs{\Omega})}
\end{equation}
for a~measurable function $u\colon\Omega\to\R$.  If either $0< p<\infty$ and
$1\leq q\leq\infty$, or $p=q=\infty$, then $\|\cdot\|_{L^{(p,q)}(\Omega)}$ is
a~norm, in other cases it is a~quasinorm. The functionals
$\|\cdot\|_{L^{p,q}(\Omega)}$ and  $\|\cdot\|_{L^{(p,q)}(\Omega)}$ are
called {Lorentz (quasi)norms}, and the corresponding
spaces $L^{p,q}(\Omega)$ and $L^{(p,q)}(\Omega)$, defined as collections of
all measurable functions $u\colon\Omega\to\R$ such that
$\nrm{u}_{L^{p,q}(\Omega)}<\infty$ or $\nrm{u}_{L^{(p,q)}(\Omega)}<\infty$,
respectively, are called {Lorentz spaces}.
Besides the relations mentioned in the introduction, it always holds that
$L^{(p,q)}(\Omega)\to L^{p,q}(\Omega)$ and
if either $p\in(1,\infty)$ and $q\in[1,\infty]$, or $p=q=\infty$,
then $L^{(p,q)}(\Omega)=L^{p,q}(\Omega)$.
By equality of spaces we mean the equality of the sets and equivalence of their norms.

\paragraph{Weak Lebesgue spaces}
Recall that weak Lebesgue spaces coincide with Lorentz spaces when the second
index equals to infinity, namely with $L^{p,\infty}(\Omega)$ and
$L^{(p,\infty)}(\Omega)$ and their norms are given by
\begin{equation} \label{E:weak-lebesgue-norms}
	\nrm{u}_{L^{p,\infty}(\Omega)}
		= \sup_{t\in(0,\abs{\Omega})} t^{\ip}\,u^*(t)
	\quad\text{and}\quad
	\nrm{u}_{L^{(p,\infty)}(\Omega)}
		= \sup_{t\in(0,\abs{\Omega})} t^{\ip}\,u^{**}(t),
\end{equation}
respectively.
It follows from \eqref{E:f**-subadditivity} that
$\|\cdot\|_{L^{(p,\infty)}(\Omega)}$ is a norm for any $p>0$. On the other hand,
$\|\cdot\|_{L^{p,\infty}(\Omega)}$ is in general only a quasinorm. Indeed, by \eqref{E:f*-subadditivity}, we have for $u,v\colon\Omega\to\R$ measurable,
\begin{equation}\label{E:quasinorm-property-of-weak-lebesgue}
	\nrm{u+v}_{L^{p,\infty}(\Omega)}
		\le \sup_{t\in(0,\abs{\Omega}/2)} (2t)^\ip\bigl( u^*(t) + v^*(t)\bigr)
		\le 2^\ip\bigl( \nrm{u}_{L^{p,\infty}(\Omega)} + \nrm{v}_{L^{p,\infty}(\Omega)} \bigr).
\end{equation}
The functional $\|\cdot\|_{L^{p,\infty}(\Omega)}$ is a norm only if $p=\infty$
and it is equivalent to a norm if $1<p<\infty$ (see \eg~\cite[Theorem~8.2.2 and
Corollary~8.2.4]{Pic:13}). Important particular instances of Lorentz spaces are
the cornerstone spaces $L^1(\Omega)$ and $L^\infty(\Omega)$. Indeed, for
$p\in(0,1]$, one has by monotonicity
\begin{equation}\label{E:L1-is-lorentz}
	\nrm{u}_{L^{(p,\infty)}(\Omega)}
		= \sup_{t\in(0,\abs\Omega)}t^{\ip-1}\int_{0}^{t}u^{*}(s)\d s
		= \abs\Omega^{\ip-1} \nrm{u}_{L^{1}(\Omega)}
		\quad \text{for every measurable $u$ on $\Omega$,}
\end{equation}
hence $L^1(\Omega)=L^{1,1}(\Omega)=L^{(p,\infty)}(\Omega)$.
Moreover, obviously,
$L^\infty(\Omega)=L^{\infty,\infty}(\Omega)=L^{(\infty,\infty)}(\Omega)$.

Our first result reads as follows.

\begin{theorem} \label{T:weak-lebesgue-non-disjointly-superadditive}
Let $\Omega\subset\rn$, $n\in\N$, be a nonempty set of positive measure
and let $r\in(0,\infty]$. Then the space
$L^{r,\infty}(\Omega)$ is not disjointly superadditive.
\end{theorem}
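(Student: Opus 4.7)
The plan is to refute the disjoint superadditivity inequality by a direct multi-scale construction: for any prospective constants $\gamma, C > 0$, I will exhibit, with $m$ arbitrarily large, pairwise disjointly supported functions $f_1, \dots, f_m$ satisfying $\nrm{f_k}_{L^{r,\infty}(\Omega)} = 1$ but $\nrm*{\sum_{k=1}^m f_k}_{L^{r,\infty}(\Omega)} \le K$ for a constant $K$ depending only on $r$. The purported inequality would then read $m = \sum_{k=1}^m \nrm{f_k}^\gamma \le C K^\gamma$, which is beaten by any choice $m > CK^\gamma$. The conceptual reason this must work is that the weak norm $\nrm{u}_{L^{r,\infty}} = \sup_t t^{\ir} u^*(t)$ registers only the single worst-concentrated scale of $u$, so geometrically shrinking scales cannot aggregate in it.

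For the main case $r \in (0, \infty)$, I would fix $c \in (0, \abs{\Omega}]$ (any such $c$ works, since $\sum_{k=1}^\infty 2^{-k} = 1$), choose pairwise disjoint measurable $E_k \subset \Omega$ with $\abs{E_k} = c\, 2^{-k}$, and set
\[
	f_k = \abs{E_k}^{-\ir} \chi_{E_k}.
\]
A short computation of the distribution function shows $f_k^* = \abs{E_k}^{-\ir} \chi_{(0, \abs{E_k})}$, so $\nrm{f_k}_{L^{r,\infty}(\Omega)} = \sup_{t \in (0, \abs{E_k})} t^{\ir} \abs{E_k}^{-\ir} = 1$. For $g = \sum_{k=1}^m f_k$, disjointness makes $g$ assume the distinct values $c^{-\ir} 2^{k/r}$ on the sets $E_k$, so $g^*$ is a step function equal to $c^{-\ir} 2^{k/r}$ on a plateau of length $\abs{E_k} = c\, 2^{-k}$, with plateaus arranged in decreasing order of $k$ from the origin. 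Letting $\sigma_k = c \sum_{j=k}^m 2^{-j}$ denote the right endpoint of the $k$-th plateau, the function $s \mapsto s^{\ir} g^*(s)$ is increasing on each plateau, hence its supremum is attained at some $s = \sigma_k$ with value $\sigma_k^{\ir} \cdot c^{-\ir} 2^{k/r}$. Using $\sigma_k \le 2c\, 2^{-k}$, this value is at most $2^{\ir}$, so $\nrm{g}_{L^{r,\infty}(\Omega)} \le 2^{\ir}$.

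The case $r = \infty$ is easier still: take any $m$ pairwise disjoint $E_k \subset \Omega$ of positive measure and set $f_k = \chi_{E_k}$, yielding $\nrm{f_k}_{L^\infty(\Omega)} = 1$ and $\nrm*{\sum f_k}_{L^\infty(\Omega)} = 1$, so $K = 1$ suffices. In either case the disjoint superadditivity inequality would force $m \le C K^\gamma$ uniformly in $m$, a contradiction. There is no real obstacle here; the only calculation that matters is the rearrangement estimate producing the uniform bound $K(r)$, which is the entire content of the counterexample.
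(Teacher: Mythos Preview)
Your proof is correct and follows essentially the same construction as the paper: disjointly supported scaled indicators $f_k=\abs{E_k}^{-1/r}\chi_{E_k}$ with geometrically decaying measures, the same rearrangement computation, and the same key bound $\sigma_k\le 2\abs{E_k}$ yielding $\nrm{g}_{L^{r,\infty}}\le 2^{1/r}$. The only cosmetic differences are that you fix the exact decay $\abs{E_k}=c\,2^{-k}$ (the paper allows any $s_{k+1}\le s_k/2$) and that you split off the case $r=\infty$, which the paper absorbs into the general formula via $1/\infty=0$.
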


\begin{proof}
Let $m\in\N$ be given. There are pairwise disjoint
measurable subsets $E_k$, $k=1,\dots,m$, of $\Omega$ such that their measures
$s_k=\abs{E_k}$ satisfy
\begin{equation} \label{E:sk}
	s_{k+1}\le \frac{s_k}{2}
		\quad\text{for each $k=1,\dots,m-1$}.
\end{equation}
We define the functions
$u_k = s_k^{-\ir} \chi_{E_k}$ for $k=1,\dots,m$.
Then
\begin{equation} \label{E:fk-rearrangement}
	u_k^* = s_k^{-\ir} \chi_{(0,s_k)},
		\quad\text{for $k=1,\dots,m$}
\end{equation}
and, consequently,
\begin{equation} \label{E:fk-norm}
	\nrm{u_k}_{L^{r,\infty}(\Omega)}
		= \sup_{t\in(0,\abs{\Omega})} t^{\ir}\, u_k^*(t)
		= 1.
\end{equation}
Let us denote
\begin{equation} \label{E:f-def}
	u = \sum_{k=1}^m u_k.
\end{equation}
Since the functions $u_k$ have pairwise disjoint supports, we have
\begin{equation} \label{E:fks-rearrangement}
	u^*
		= \sum_{k=1}^{m} s_k^{-\ir} \chi_{(a_k,a_{k-1})},
\end{equation}
where $a_k=s_{k+1}+\dots+s_m$ for $k=0,\dots,m-1$
and $a_m=0$
(see Figure~\ref{F:fks-rearrangement}).
\begin{figure}[b!]
\centering
\begin{tikzpicture}
\begin{axis}[
	clip=false,
	x=80mm, y=30mm,
	axis x line=middle,
	axis y line=middle,
	axis line style={-},
	xtick=\empty,
	extra x ticks={0,0.06,0.18,0.45,1},
	extra x tick labels={$a_m$,,,$a_1$,$a_0$},
	ytick=\empty,
	extra y ticks={0.2,1},
	extra y tick labels={$s_1^{-\ir}$,$s_m^{-\ir}$},
	ymin=-0.03, ymax=1.03,
	xmin=-0.03, xmax=1.03,
	]
	\addplot[ycomb, dashed]
		coordinates {(0.06,1) (0.18,0.75) (0.45,0.45) (1,0.2)};
	\addplot[jump mark left, ultra thick]
		coordinates { (0,1) (0.06,0.75) (0.18,0.45) (0.45,0.2) (1,0.2)};
	\node[below] at (axis cs: 0.725,0) {$\underbrace{\phantom{\hbox to 40mm{}}}_{s_1}$};
	\node[below] at (axis cs: 0.12,0) {$\underbrace{\phantom{\hbox to 1mm{}}}_{s_{m-1}}$};
	\node[above right] at (axis cs: 1,0.2) {$u^*$};
\end{axis}
\end{tikzpicture}
\caption{Rearrangement of the sum $\sum u_k$.}
\label{F:fks-rearrangement}
\end{figure}
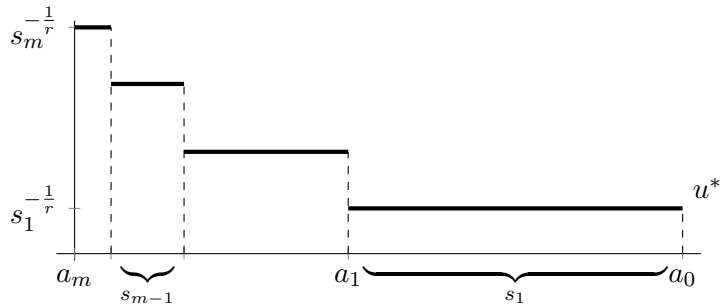
Consequently,
\begin{align} \label{E:f-norm}
	\begin{split}
	\nrm{u}_{L^{r,\infty}(\Omega)}
		& = \sup_{t\in(0,\abs\Omega)} t^\ir\, u^*(t)
			= \adjustlimits\max_{j\in\{1,\dots,m\}} \sup_{t\in(a_j,a_{j-1})}
			\sum_{k=1}^m t^\ir s_k^{-\ir} \chi_{(a_k,a_{k-1})}(t)
			\\
		& = \max_{j\in\{1,\dots,m\}} a_{j-1}^\ir s_j^{-\ir}
			\le \max_{j\in\{1,\dots,m\}} (2s_j)^\ir s_j^{-\ir}
			= 2^\ir,
	\end{split}
\end{align}
where we have used property \eqref{E:sk} to show
\begin{equation} \label{E:aj2sj}
	a_{j-1} = s_j+\cdots+s_m \le 2s_j
		\quad\text{for $j=1,\dots,m$}.
\end{equation}
Now suppose that the space $L^{r,\infty}(\Omega)$ has the disjoint
superadditivity property. Then there exists $\gamma>0$ and
$C>0$ such that
\begin{equation*}
	\sum_{k=1}^{m} \nrm{u_k}_{L^{r,\infty}(\Omega)}^\gamma
		\le C \nrm{u}_{L^{r,\infty}(\Omega)}^\gamma,
\end{equation*}
that is, by \eqref{E:fk-norm} and \eqref{E:f-norm}, $m \le C2^{\gamma/r}$,
which is clearly absurd because $m$ was selected
arbitrarily at the beginning.
\end{proof}

If we consider the functional \eqref{E:L(pq)-def} instead of \eqref{E:Lpq-def},
then we obtain a slightly different result.

\begin{theorem} \label{T:weak-lebesgue-non-disjointly-superadditive-2}
Let $\Omega\subset\rn$, $n\in\N$, be a nonempty set of positive measure.
Then the space $L^{(r,\infty)}(\Omega)$ is disjointly superadditive
if and only if $r\in(0,1]$.
\end{theorem}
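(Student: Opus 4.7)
The plan is to split the argument at $r=1$ and treat the two ranges by opposite methods. For $r\in(0,1]$, formula~\eqref{E:L1-is-lorentz} already identifies $\nrm{\cdot}_{L^{(r,\infty)}(\Omega)}$ as the constant multiple $\abs{\Omega}^{1/r-1}\nrm{\cdot}_{L^1(\Omega)}$; since the $L^1$ norm is additive on pairwise disjointly supported functions, disjoint superadditivity will hold trivially with $\gamma=1$ and $C=1$, in fact with equality.

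For $r>1$ I would imitate the counterexample from the proof of Theorem~\ref{T:weak-lebesgue-non-disjointly-superadditive}: given $m\in\N$, choose pairwise disjoint measurable sets $E_k\subset\Omega$ of measures $s_k$ satisfying $s_{k+1}\le s_k/2$, and define $u_k = s_k^{-1/r}\chi_{E_k}$. A routine computation of $u_k^{**}$, equal to $s_k^{-1/r}$ on $(0,s_k]$ and $s_k^{1-1/r}/t$ on $(s_k,\infty)$, shows that $\sup_{t>0} t^{1/r}u_k^{**}(t)=1$ and hence $\nrm{u_k}_{L^{(r,\infty)}(\Omega)}=1$ for every $k$.

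The main obstacle, and the genuine departure from the purely rearrangement-based argument in Theorem~\ref{T:weak-lebesgue-non-disjointly-superadditive}, lies in proving that $u = \sum_{k=1}^{m} u_k$ satisfies $\nrm{u}_{L^{(r,\infty)}(\Omega)}\le C$ with $C$ independent of $m$. My strategy is to apply the subadditivity~\eqref{E:f**-subadditivity} of the maximal rearrangement to reduce matters to bounding $\sup_{t>0}\sum_{k=1}^{m} t^{1/r}u_k^{**}(t)$. For each fixed $t$, the indices naturally split into those with $s_k\ge t$, which contribute $(t/s_k)^{1/r}$, and those with $s_k<t$, which contribute $(s_k/t)^{1-1/r}$. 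Using $s_{k+1}\le s_k/2$, both collections collapse into geometric series with ratios $2^{-1/r}$ and $2^{-(1-1/r)}$, respectively; the assumption $r>1$ is exactly what forces the second ratio below $1$ (the first always is), so the total sum is bounded by a constant depending only on $r$.

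Once $\nrm{u}_{L^{(r,\infty)}(\Omega)}\le C(r)$ is in hand, the usual contradiction finishes the argument: any postulated inequality $\sum_{k=1}^m\nrm{u_k}_{L^{(r,\infty)}(\Omega)}^\gamma \le C'\nrm{u}_{L^{(r,\infty)}(\Omega)}^\gamma$ would yield $m\le C'C(r)^\gamma$, contradicting the arbitrariness of $m$. The endpoint $r=\infty$ is already covered by Theorem~\ref{T:weak-lebesgue-non-disjointly-superadditive} together with the identification $L^{(\infty,\infty)}(\Omega)=L^{\infty,\infty}(\Omega)$ noted at the end of the section on weak Lebesgue spaces.
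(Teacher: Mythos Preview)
Your proof is correct and follows the same overall strategy as the paper: the case $r\in(0,1]$ is handled identically via~\eqref{E:L1-is-lorentz}, and for $r>1$ you use the same disjointly supported functions $u_k=s_k^{-1/r}\chi_{E_k}$ with geometrically decreasing masses, reaching a contradiction once $\nrm{u}_{L^{(r,\infty)}(\Omega)}$ is shown to be bounded independently of $m$.

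The one genuine difference is in how that uniform bound is obtained. The paper computes $u^{**}$ explicitly from the staircase formula~\eqref{E:fks-rearrangement} for $u^*$, splitting the supremum over the intervals $(a_j,a_{j-1}]$ and estimating each piece separately. You instead invoke the subadditivity~\eqref{E:f**-subadditivity} to reduce to $\sup_t\sum_k t^{1/r}u_k^{**}(t)$, and then observe that for each fixed $t$ the terms split into two geometric series with ratios $2^{-1/r}$ and $2^{-(1-1/r)}$. Your route is shorter and more transparent: it sidesteps the piecewise integration of $u^*$ entirely and makes the role of the hypothesis $r>1$ (needed precisely so that $1-1/r>0$ and the second series converges) completely explicit. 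The paper's direct computation yields the concrete bound $4$ for all $r\in(1,\infty]$ at once, whereas your constant $C(r)=(1-2^{-1/r})^{-1}+(1-2^{-(1-1/r)})^{-1}$ blows up as $r\to1_+$ or $r\to\infty$, which is why you (correctly) hive off $r=\infty$ and appeal to Theorem~\ref{T:weak-lebesgue-non-disjointly-superadditive}. Either way the conclusion is the same.
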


\begin{proof}
Assume that $r\in(0,1]$. Then, by~\eqref{E:L1-is-lorentz},
$L^{(r,\infty)}(\Omega)=L^{1}(\Omega)$ with equivalent norms, hence
$L^{(r,\infty)}(\Omega)$ is disjointly superadditive.

Assume that $r\in(1,\infty]$.  Let $m\in\N$ be given and suppose that the sets
$E_k$, $k=1,\ldots,m$, are chosen as in the proof of
Theorem~\ref{T:weak-lebesgue-non-disjointly-superadditive}.  Let the functions
$u_k$ for $k=1,\dots,m$, the numbers $s_k$ for $k=1,\dots,m$, and the numbers
$a_j$ for $j=0,\dots,m$, have the same meaning as in the proof of
Theorem~\ref{T:weak-lebesgue-non-disjointly-superadditive}.
By~\eqref{E:fk-rearrangement},
\begin{equation*}
	u_k^{**}(t)
		= s_k^{-\ir} \chi_{(0,s_k)}(t)
			+ \frac{s_k^{1-\ir}}{t} \chi_{[s_k,\abs{\Omega})}(t)
		\quad\text{for $t\in(0,\abs{\Omega})$ and $k=1,\dots,m$},
\end{equation*}
and therefore
\begin{align*}
	\nrm{u_k}_{L^{(r,\infty)}(\Omega)}
		= \sup_{t\in(0,\abs{\Omega})} t^\ir\,u_k^{**}(t)
		= \max\left\{s_k^{-\ir} \sup_{t\in(0,s_k)} t^\ir,
				\;
				s_k^{1-\ir} \sup_{t\in[s_k,\abs{\Omega})} t^{\ir-1}\right\}
		= 1
\end{align*}
for every $k=1,\dots,m$.
Denote by $u$ the sum of all $u_k$'s as in \eqref{E:f-def},
fix $j\in\{1,\dots,m-1\}$ and $t\in(a_j,a_{j-1}]$. Then,
using \eqref{E:fks-rearrangement},
\begin{equation*}
	\int_0^t u^*(s)\,\d s
		= \int_{0}^{a_j} u^*(s)\,\d s
			+ \int_{a_j}^{t} u^*(s)\,\d s
		= \sum_{k=j+1}^m s_k^{1-\ir} + (t-a_j) s_j^{-\ir},
\end{equation*}
and therefore
\begin{align} \label{E:sup-ajs}
	\begin{split}
	\sup_{t\in(a_j,a_{j-1}]} t^\ir\,u^{**}(t)
		& \le \sup_{t\in(a_j,a_{j-1}]}
			t^{\ir-1} \left( \sum_{k=j+1}^{m} s_k^{1-\ir} + t s_j^{-\ir}\right)
		\le a_j^{\ir-1} \sum_{k=j+1}^{m} s_k^{1-\ir} + a_{j-1}^\ir s_j^{-\ir}.
	\end{split}
\end{align}
Due to \eqref{E:aj2sj}, we have $a_{j-1}^\ir s_j^{-\ir}\le 2^\ir$.
By the definition of $a_j$, it is $a_j\ge s_{j+1}$ and, using \eqref{E:sk}, we have
\begin{equation*}
	a_j^{\ir-1} \sum_{k=j+1}^{m} s_k^{1-\ir}
		\le a_j^{\ir-1} s_{j+1}^{1-\ir} \sum_{k=j+1}^{m} \left(2^{1-\ir}\right)^{j+1-k}
		\le 2^{1-\ir}.
\end{equation*}
Altogether, \eqref{E:sup-ajs} yields
\begin{equation} \label{E:sup1}
	\sup_{t\in(a_j,a_{j-1}]} t^\ir\, u^{**}(t)
		\le 2^\ir + 2^{1-\ir}
		\le 4
	\quad\text{for each $j=1,\dots, m-1$}.
\end{equation}
It remains to consider the case when $t\in(0,a_{m-1}]$. But, for such $t$,
\begin{equation*}
	\int_0^t u^*(s)\,\d s
		= t s_m^{-\ir},
\end{equation*}
and therefore
\begin{equation} \label{E:sup2}
	\sup_{t\in(0,a_{m-1}]} t^\ir\,u^{**}(t)
		= \sup_{t\in(0,a_{m-1}]} t^\ir s_m^{-\ir}
		= a_{m-1}^\ir s_m^{-\ir}
		= 1.
\end{equation}
Estimates \eqref{E:sup1} and \eqref{E:sup2} combined give
$\nrm{u}_{L^{(r,\infty)}(\Omega)}\le 4$.
Now, assuming that $L^{(r,\infty)}(\Omega)$ has the disjoint
superadditivity property for some $\gamma>0$ and $C>0$, we conclude
that $m\le C4^\gamma$, which is impossible, as $m$ was arbitrary.
\end{proof}

\begin{remark}
The only property of the measure space we required in the proof was that there
exist pairwise disjoint sets of prescribed measure.
Theorems~\ref{T:weak-lebesgue-non-disjointly-superadditive}
and \ref{T:weak-lebesgue-non-disjointly-superadditive-2} thus remain valid
even if we replace the set $\Omega\subset\rn$ by any non-atomic $\sigma$-finite
measure space.
\end{remark}

\section{Shrinking property}

A key step to the maximal non-compactness is shrinking property of an
embedding.

\begin{definition}
Let $X(\Omega)$ and $Y(\Omega)$ be quasinormed linear spaces of functions defined on
$\Omega\subset\rn$.  We say that the embedding $I\colon X(\Omega)\to Y(\Omega)$
has \emph{shrinking property} if for every nonempty open set $G\subset\Omega$ one
has
\begin{equation*}
	\nrm{I} = \sup\{\nrm{u}_{Y(\Omega)}: \nrm{u}_{X(\Omega)}\le 1,\ \sprt u\subset G\}.
\end{equation*}
\end{definition}

In other words, shrinking property guarantees that the norm of an embedding
is attained by functions having their support in an arbitrary nonempty open set $G$.

\begin{proposition}\label{P:shrinking-sobolev}
Let $\Omega\subset\rn$, $n\in\N$, be open bounded and
nonempty set and let $k\in\N$, $k<n$.
Assume that $X(\Omega)$ is either
$L^{p,q}(\Omega)$ or $L^{(p,q)}(\Omega)$ with $p\in[1,\nk)$ and
$q\in[1,\infty]$ and let $Y(\Omega)$ be one of the spaces $L^{p^*,\infty}(\Omega)$ or
$L^{(p^*,\infty)}(\Omega)$, where $p^*=\frac{np}{n-kp}$.  Then the embedding
\begin{equation} \label{E:emb}
	I\colon V_0^k X(\Omega)\to Y(\Omega)
\end{equation}
has shrinking property.
\end{proposition}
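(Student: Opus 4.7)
The plan is to prove shrinking property via a rescaling argument exploiting the scale-invariance of the sharp Sobolev embedding. The critical feature is that the exponent $p^{*}=\frac{np}{n-kp}$ is precisely chosen so that under the dilation $u(x)\mapsto u(\mu x)$, the norm $\nrm{\nabla^{k}u}_{X}$ and the target norm $\nrm{u}_{Y}$ scale by the \emph{same} power of $\mu$, whence the ratio $\nrm{u}_{Y}/\nrm{u}_{V_{0}^{k}X}$ is preserved.

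Fix $\varepsilon>0$ and, by definition of the operator norm, choose $u\in V_{0}^{k}X(\Omega)$ with $\nrm{u}_{V_{0}^{k}X(\Omega)}\le 1$ and $\nrm{u}_{Y(\Omega)}>\nrm{I}-\varepsilon$. Given the nonempty open set $G\subset\Omega$, pick a ball $B(x_{0},r)\subset G$ and $R>0$ such that $\Omega\subset B(y_{0},R)$ after an appropriate translation; set $\mu=R/r\ge 1$ and $c=\mu^{n/p-k}$. Define the rescaled function $v(x)=c\,\tilde u(y_{0}+\mu(x-x_{0}))$, where $\tilde u$ is the zero extension of $u$; then $\sprt v\subset B(x_{0},r)\subset G$. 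The elementary scaling identities $v^{*}(t)=c\,u^{*}(\mu^{n}t)$, $v^{**}(t)=c\,u^{**}(\mu^{n}t)$, together with $D^{\beta}v(x)=c\,\mu^{k}\,(D^{\beta}u)(y_{0}+\mu(x-x_{0}))$ for $\abs{\beta}=k$, express the norms of $v$ in terms of those of $u$. The identity $n/p^{*}=n/p-k$, equivalent to the definition of $p^{*}$, makes the domain and target norms transform by the same factor $c\,\mu^{k-n/p}=c\,\mu^{-n/p^{*}}=1$, so $\nrm{v}_{V_{0}^{k}X(\Omega)}=\nrm{u}_{V_{0}^{k}X(\Omega)}\le 1$ and $\nrm{v}_{Y(\Omega)}=\nrm{u}_{Y(\Omega)}>\nrm{I}-\varepsilon$. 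Sending $\varepsilon\to 0$ yields the nontrivial direction of shrinking property; the reverse inequality is immediate.

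The main technical obstacle is verifying these clean scaling identities in each of the four admissible combinations of $X$ and $Y$. When $X=L^{p,q}(\Omega)$, the rearrangement $u^{*}$ vanishes past $\abs{\Omega}$, so the $(0,\abs{\Omega})$-integration coincides with $(0,\infty)$-integration and scaling is exact; the same observation handles $Y=L^{p^{*},\infty}(\Omega)$. For $Y=L^{(p^{*},\infty)}(\Omega)$, a short monotonicity argument shows that $t^{1/p^{*}}f^{**}(t)$ is nonincreasing for $t$ past the measure of $\sprt f$ (using $p^{*}\ge 1$), so the sup over $(0,\abs{\Omega})$ still agrees with the sup over $(0,\infty)$ and the target scales exactly. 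The subtle case is $X=L^{(p,q)}(\Omega)$ with $q<\infty$, where $u^{**}$ persists past $\abs{\Omega}$ with $1/t$ decay, contributing a residual tail that breaks the exact identity. I would handle this either by arranging the initial $u$ to have arbitrarily concentrated support through a preliminary rescaling, making the tail negligible, or by reducing to $L^{p,q}(\Omega)$ through the norm equivalence valid for $p>1$ and treating $p=1$ through the identification in~\eqref{E:L1-is-lorentz}.
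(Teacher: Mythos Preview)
Your approach is essentially the paper's: both rest on the dilation invariance of the ratio $\nrm{u}_{Y}/\nrm{u}_{V_0^kX}$ under $u(x)\mapsto u(\mu x)$, which is exactly what the relation $n/p^{*}=n/p-k$ encodes. The paper packages the argument slightly differently: rather than rescaling a near-extremal $u$ directly into $G$, it sandwiches the $G$-restricted supremum between the embedding norms on two concentric balls $B_1\subset G$ and $B_2\supset\Omega$ and then shows these two norms coincide via the dilation $u_\kappa(x)=u(\kappa x)$ with $\kappa=r_2/r_1$. Because $\kappa^{n}\abs{B_1}=\abs{B_2}$, the change of variables $t\mapsto\kappa^{n}t$ carries the interval $(0,\abs{B_1})$ \emph{exactly} onto $(0,\abs{B_2})$, so the identity $\nrm{u_\kappa}_{V_0^kX(B_1)}=\kappa^{k-n/p}\nrm{u}_{V_0^kX(B_2)}$ holds on the nose for all four combinations of $X$ and $Y$, including $X=L^{(p,q)}$ with $q<\infty$. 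This is precisely how the paper sidesteps the residual-tail difficulty you flag, without appealing to the norm equivalence $L^{(p,q)}\simeq L^{p,q}$ (which, as you implicitly note, does not transfer the \emph{exact} shrinking identity) or to an auxiliary support-concentration limit.
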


\begin{proof}
Let $G\subset\Omega$ be open and nonempty.
Since $\Omega$ is bounded, open and nonempty, there are two concentric
balls $B_1$ and $B_2$ obeying $B_1\subset G\subset\Omega\subset B_2$.
It holds
\begin{equation} \label{E:norms-are-the-same}
	\sup_{u\ne 0} \frac{\nrm{u}_{Y(B_1)}}{\nrm{u}_{V_0^kX(B_1)}}
		\le \sup_{\substack{u\ne 0\\\sprt u\subset G}}
			\frac{\nrm{u}_{Y(\Omega)}}{\nrm{u}_{V_0^kX(\Omega)}}
		\le \sup_{u\ne 0} \frac{\nrm{u}_{Y(\Omega)}}{\nrm{u}_{V_0^kX(\Omega)}}
		\le \sup_{u\ne 0} \frac{\nrm{u}_{Y(B_2)}}{\nrm{u}_{V_0^kX(B_2)}},
\end{equation}
since every $u\in V_0^kX(B_1)$ extended by zero on $\Omega\setminus B_1$
belongs to $V_0^kX(\Omega)$
and every $u\in V_0^kX(\Omega)$ extended
by zero on $B_2\setminus\Omega$ is an element of $V_0^kX(B_2)$.
We show that in fact the equalities in \eqref{E:norms-are-the-same} hold.
By the translation invariance of the Lorentz functionals, we may
assume that both the balls are centered at the origin.
Let us denote $r_1$ and $r_2$ the radii of $B_1$ and $B_2$, respectively.
For $u\in V_0^kX(B_2)$, set $\kappa=r_2/r_1$ and define
\begin{equation*}
	u_{\kappa}(x) = u(\kappa x)
		\quad \text{for $x\in B_1$.}
\end{equation*}
Clearly $u_{\kappa}$ is a Sobolev function on $B_1$.
We will show that
\begin{equation} \label{E:ratios-are-the-same}
	\frac{\nrm{u_{\kappa}}_{Y(B_1)}}{\nrm{u_{\kappa}}_{V_0^kX(B_1)}}
		= \frac{\nrm{u}_{Y(B_2)}}{\nrm{u}_{V_0^kX(B_2)}}.
\end{equation}
First, observe that
\begin{equation} \label{E:star}
	(u_{\kappa})^*(t) = u^*(\kappa^nt)
\end{equation}
and
\begin{equation} \label{E:doublestar}
	(u_{\kappa})^{**}(t) = u^{**}(\kappa^nt)
\end{equation}
for $t>0$. Consequently,
\begin{equation} \label{E:D-star}
	(D^\multiindex u_{\kappa})^*(t) = \kappa^{\abs{\multiindex}} (D^\multiindex u)^*(\kappa^nt)
\end{equation}
and
\begin{equation} \label{E:D-doublestar}
	(D^\multiindex u_{\kappa})^{**}(t) = \kappa^{\abs{\multiindex}} (D^\multiindex u)^{**}(\kappa^nt)
\end{equation}
for $t\in(0,\abs{B_1})$ and for each multiindex $\multiindex$.
Now, assume that $q=\infty$ and $X=L^{p,\infty}$. We have
\begin{align*}
	\nrm{u_{\kappa}}_{V^{k}_0L^{p,\infty}(B_1)}
		& = \sum_{\abs{\multiindex} = k} \nrm{D^{\multiindex} u_{\kappa}}_{L^{p,\infty}(B_1)}
			= \sum_{\abs{\multiindex} = k} \sup_{t\in(0,\abs{B_1})}
				t^{\frac1p} (D^{\multiindex}u_{\kappa})^{*}(t)
			\\
		& = \sum_{\abs{\multiindex} = k} \kappa^{\abs{\multiindex}}
				\sup_{t\in(0,\abs{B_1})} t^{\frac1p} (D^{\multiindex}u)^{*}(\kappa^nt)
			= \kappa^{k-\frac np} \sum_{\abs{\multiindex} = k}
				\sup_{s\in(0,\abs{B_2})} s^{\frac1p} (D^{\multiindex}u)^{*}(s)
			\\
		& = \kappa^{k-\frac np} \nrm{u}_{V^k_0L^{p,\infty}(B_2)},
\end{align*}
where we used \eqref{E:D-star} and the change of variables $s=\kappa^nt$.
If $X=L^{(p,\infty)}$, we use \eqref{E:D-doublestar} instead. For $q<\infty$,
analogous computations are shown in \citep[Theorem~1.2]{Bou:19}. Thus,
\begin{equation} \label{E:norms-u-hatu}
	\nrm{u_{\kappa}}_{V^{k}_0X(B_1)}
		= \kappa^{k-\frac np} \nrm{u}_{V^k_0X(B_2)}.
\end{equation}
Similarly, if $Y=L^{p^*,\infty}$, we have
\begin{align*}
	\nrm{u_{\kappa}}_{L^{p^*,\infty}(B_1)}
		& = \sup_{t\in(0,\abs{B_1})} t^{\frac{1}{p^*}} (u_{\kappa})^*(t)
			= \sup_{t\in(0,\abs{B_1})} t^{\frac{1}{p^*}} u^*(\kappa^nt)
			\\
		& = \kappa^{-\frac{n}{p^*}} \sup_{s\in(0,\abs{B_2})} s^{\frac{1}{p^*}} u^*(s)
			= \kappa^{k-\frac{n}{p}} \nrm{u}_{L^{p^*,\infty}(B_2)},
\end{align*}
where we substituted $s=\kappa^nt$ and used the relation
\begin{equation*}
	-\frac{n}{p^*} = k-\frac np.
\end{equation*}
For $Y=L^{(p^*,\infty)}$, we use \eqref{E:doublestar} instead of \eqref{E:star}.
Altogether,
\begin{equation} \label{E:norms-u-hatu2}
	\nrm{u_{\kappa}}_{Y(B_1)}
		= \kappa^{k-\frac np} \nrm{u}_{Y(B_2)}
\end{equation}
and \eqref{E:ratios-are-the-same} follows by \eqref{E:norms-u-hatu} and
\eqref{E:norms-u-hatu2}.
\end{proof}

\section{Maximal non-compactness of a Sobolev embedding into weak Lebesgue space}
\label{S:maximal-non-compactness}

In this section, we focus on Question~\ref{Q:1}.

\begin{theorem} \label{T:max-non-compact-general}
Let $\Omega\subset\rn$, $n\in\N$, be open nonempty set of positive measure and
let $r\in(0,\infty)$.  Assume that $X(\Omega)$ is a quasinormed linear space
containing measurable functions and $Y(\Omega)$ is either
$L^{r,\infty}(\Omega)$ or $L^{(r,\infty)}(\Omega)$. If $I\colon X(\Omega)\to
Y(\Omega)$ has shrinking property, then $I$ is maximally non-compact.
\end{theorem}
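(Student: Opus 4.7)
The plan is to argue by contradiction: assume $\bmc(I) < M := \|I\|$, so there exist $\rho \in (0, M)$ and centers $y_1, \ldots, y_N \in Y$ with
\[I(B_X) \subset \bigcup_{j=1}^N B_Y(y_j, \rho),\]
and then construct $u \in X$ with $\|u\|_X \le 1$ such that $\|I(u) - y_j\|_Y \ge \rho$ for every $j$. This immediately contradicts the existence of the finite cover.

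The structural fact I would exploit is that every $y_j$, being an element of the weak Lebesgue space $Y$, obeys the decay estimate $|\{|y_j| > \theta\}| \le \|y_j\|_Y^r \theta^{-r}$. Setting $E_\theta := \{x \in \Omega : \max_j |y_j(x)| \le \theta\}$, the measure $|E_\theta^c|$ is at most $\sum_j \|y_j\|_Y^r \theta^{-r}$, which is small for $\theta$ large. I would then invoke the Lebesgue density theorem to pick a density point $x_0 \in E_\theta$ and choose an open ball $G = B(x_0, r_0) \subset \Omega$ fulfilling (i) $|G \cap E_\theta^c|/|G| < \eta$ for a small parameter $\eta > 0$, and (ii) $\theta \cdot |G|^{1/r} \le M - \rho'$, where $\rho' \in (\rho, M)$ is fixed in advance. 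Both conditions are achievable by taking $r_0$ small. By the shrinking property applied to $G$, I can then produce $u$ with $\|u\|_X \le 1$, $\sprt u \subset G$, and $\|u\|_Y > M - \epsilon$ for a small $\epsilon > 0$.

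The heart of the argument is the lower bound on $\|u - y_j\|_Y$. Since $\|u\|_Y > M - \epsilon$, there exists $\sigma \in (0, |G|]$ with $\sigma^{1/r} u^*(\sigma) > M - 2\epsilon$, yielding a subset $F \subset G$ of measure at least $\sigma$ where $|u| > (M - 2\epsilon)\sigma^{-1/r}$. On $F \cap E_\theta$, whose measure is at least $\sigma - \eta|G|$, one has $|u - y_j| \ge (M - 2\epsilon)\sigma^{-1/r} - \theta$. The definition of the (quasi)norm on $Y$ then gives
\[\|u - y_j\|_Y \ge (1 - \eta|G|/\sigma)^{1/r}\bigl[M - 2\epsilon - \theta \sigma^{1/r}\bigr].\]
Condition (ii) guarantees $\theta \sigma^{1/r} \le \theta |G|^{1/r} \le M - \rho'$, so the bracket exceeds $\rho'$ for $\epsilon$ sufficiently small, while the first factor can be made as close to $1$ as one wishes by taking $\eta$ small. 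The analogous lower bound for $Y = L^{(r,\infty)}(\Omega)$ is obtained by the same mass-on-a-big-set argument via $u^{**}$ in place of $u^*$.

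The main obstacle I anticipate lies precisely in controlling the ratio $\sigma/|G|$: if the peak scale $\sigma$ of $u$ produced by the shrinking property is much smaller than the support measure $|G|$, then $(1 - \eta|G|/\sigma)^{1/r}$ need not be close to $1$. To handle this, I would iterate the shrinking procedure, replacing $G$ with a smaller open neighborhood of $x_0$ whose measure is tuned to the actual peak of the previously found $u$, repeating until the peak and support measure become comparable; alternatively, one may instead take a supremum over scales $\sigma$ in the lower bound for $\|u-y_j\|_Y$ and choose the scale that optimally balances the level of $u^*$ against the global $y_j$-decay estimate. Either refinement pushes $\|u - y_j\|_Y$ strictly above $\rho$ uniformly in $j$, completing the contradiction and hence showing $\bmc(I) \ge M$, which combined with the trivial bound $\bmc(I) \le \|I\| = M$ yields the claim.
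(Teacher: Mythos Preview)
Your plan identifies its own fatal obstacle and does not actually overcome it. The circular dependency is real: you must choose the density parameter $\eta$ (and hence the ball $G$) \emph{before} you invoke the shrinking property, but the peak scale $\sigma$ of the resulting near-extremizer $u$ is revealed only \emph{after} $u$ is produced, and the shrinking property gives you no control whatsoever over $\sigma/|G|$. Your first proposed fix (iterate, tuning $|G_{k+1}|$ to the previous $\sigma_k$) has no termination or convergence mechanism: each new $u_{k+1}$ may again concentrate on a scale $\sigma_{k+1}\ll|G_{k+1}|$, so the ratio can shrink at every step. Your second fix (optimize over scales using the global decay $y_j^\ast(t)\le\|y_j\|_Y\,t^{-1/r}$) cannot work either, because that decay profile is exactly the one $u$ itself obeys, so the difference $u^\ast-y_j^\ast$ carries no positive lower bound at any scale; the locality information (that the $y_j$ are small on most of $G$) is precisely what you are unable to couple to the unknown peak scale of $u$.

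The paper's proof sidesteps this difficulty by a completely different mechanism. Rather than seeking \emph{one} function far from all centers, it manufactures $m=j\ell$ translated, equimeasurable, disjointly supported near-extremizers $u_1,\dots,u_m$ and applies the pigeonhole principle: some $\ell$ of them, say $u^1,\dots,u^\ell$, must lie in the \emph{same} ball $v_i+\rpar B_Y$. Because the $u^k$ share a common rearrangement, they share a common scale $t_0$ at which $t_0^{1/r}(u^k)^\ast(t_0)>\rpar+2\varepsilon$; the value of $t_0$ is irrelevant. One then truncates $v_i$ on each support to obtain disjointly supported pieces $w^k$ with $(w^k)^\ast((1-\eta)t_0)\gtrsim\varepsilon t_0^{-1/r}$ (or the analogous $w^{**}$ bound in the $L^{(r,\infty)}$ case), and the disjointness forces $\|w\|_Y\gtrsim\varepsilon\,\ell^{1/r}$. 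Since $|w|\le|v_i|$ and the centers may be assumed to satisfy $\|v_i\|_Y\le 2^{1+1/r}\|I\|$, taking $\ell$ large yields the contradiction. The key idea you are missing is this pigeonhole-plus-accumulation step, which converts proximity of many disjoint functions to one center into a blow-up of that center's norm.
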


\begin{proof}
Assume the contrary, that is $\bmc(I)<\nrm{I}$,
and set $\rpar$ and $\varepsilon>0$ such that $\bmc(I)<\rpar<\rpar+2\varepsilon<\nrm{I}$.
There exist $j\in\N$ and functions
$v_1,\dots,v_j$ in $Y(\Omega)$ such that
\begin{equation}\label{E:union}
	B_{X(\Omega)}
		\subset \bigcup_{k=1}^{j}
			\left(v_k+\rpar B_{Y(\Omega)}\right).
\end{equation}
We may assume that
\begin{equation}\label{E:out}
	\nrm{v_k}_{Y(\Omega)}\le 2^{1+\ir} \nrm{I}
		\quad\text{for each $k=1,\dots,j$}.
\end{equation}
Indeed, suppose that $\nrm{v_k}_{Y(\Omega)}>2^{1+\ir}\nrm{I}$ for some $k$.
Recall that $\|\cdot\|_{L^{(r,\infty)}}$ is a norm
while, by~\eqref{E:quasinorm-property-of-weak-lebesgue}, $\|\cdot\|_{L^{r,\infty}}$ is a quasinorm with the constant $2^{1/r}$. In any case, every $v\in B_{X(\Omega)}$
obeys $\nrm{v}_{Y(\Omega)}\le \nrm{I}$ and
\begin{equation*}
	\nrm{v_k-v}_{Y(\Omega)}
		\ge 2^{-\ir}\nrm{v_k}_{Y(\Omega)} - \nrm{v}_{Y(\Omega)}
		> 2\nrm{I} - \nrm{I}
		>\rpar.
\end{equation*}
In other words,
\begin{equation*}
	B_{X(\Omega)}
		\cap \left(v_k+\rpar B_{Y(\Omega)}\right)
		= \emptyset,
\end{equation*}
and so the function $v_k$ can be excluded from the collection on the right-hand
side of~\eqref{E:union}.

If $Y=L^{r,\infty}$, we choose $\eta\in(0,1)$ such that
\begin{equation} \label{E:eta-def}
	0 \ge \rpar\left( 1-\eta^{-\ir} \right)
		\ge - \varepsilon
\end{equation}
and set $\eta=0$ if $Y=L^{(r,\infty)}$. Fix $\ell\in\N$ such that
\begin{equation} \label{E:ell-def}
	\bigl(\ell(1-\eta)\bigr)^\ir
		\ge \frac{2^{1+\ir}}{\varepsilon} \nrm{I}
\end{equation}
and set $m = j \ell$ and $\omega=\abs{\Omega}/(\ell(1-\eta))$.
Denote by $B_1,\dots,B_m$ pairwise disjoint balls of the same radii, centered
at $x_1,\dots,x_m$, respectively, and all contained in $\Omega$.
We may without loss of generality assume that $\abs{B_k}<\omega$
for all $k=1,\dots,m$.
By the shrinking property, there exists $u_1\in X(\Omega)$ such that $\nrm{u_1}_{X(\Omega)}=1$, $\sprt u_1\subset B_1$
and $\nrm{u_1}_{Y(\Omega)}>\rpar+2\varepsilon$.
For each $k=2,\dots,m$, we define $u_k\colon\Omega\to\R$ by
\begin{equation*}
	u_k(x) = u_1(x + x_1 - x_k)\chi_{B_k}(x)
		\quad\text{for $x\in\Omega$}.
\end{equation*}
Then all the functions $u_k$ are equimeasurable, \ie
\begin{equation}\label{E:same-rear}
	u_k^* = u_i^* \quad\text{for every $k,i=1,\dots,m$},
\end{equation}
one has $u_k\in X(\Omega)$, $\nrm{u_k}_{X(\Omega)}=1$, $\sprt u_k\subset B_k$, and
\begin{equation} \label{E:fk-lower}
	\nrm{u_k}_{Y(\Omega)}
		> \rpar+2\varepsilon
		\quad\text{for each $k=1,\dots,m$}.
\end{equation}
Due to \eqref{E:union}, it holds that
\begin{equation} \label{E:fk-subset}
	\{u_1,\dots, u_m\}
		\subset B_{X(\Omega)}
		\subset \bigcup_{k=1}^{j}
			\left(v_k+\rpar B_{Y(\Omega)}\right).
\end{equation}
By the Pigeonhole principle, at least one of the balls in the union on the rightmost
side of \eqref{E:fk-subset}
must contain at least $m/j=\ell$ functions from $\{u_1,\dots, u_m\}$.
More precisely, there exist $i\in\{1,\dots,j\}$
and distinct functions
$\{u^1,\dots,u^{\ell}\}\subset\{u_1,\dots,u_m\}$
such that $u^k\in v_i + \rpar B_{Y(\Omega)}$
for every $k=1,\dots,\ell$. That is,
\begin{equation} \label{E:g-sigma-f-k}
	\nrm{v_{i}-u^k}_{Y(\Omega)}
		< \rpar
			\quad\text{for every $k=1,\dots,\ell$}.
\end{equation}
Let us denote
\begin{equation*}
	v^k = v_i\, \chi_{\sprt u^k}
		\quad\text{for $k=1,\dots,\ell$}
\end{equation*}
and $v = \sum_{k=1}^\ell v^k$.
Next, define
\begin{equation*}
	w^k(x)=
	\begin{cases}
		v^k(x) & \text{if $\abs{u^k(x)} \ge \abs{v^k(x)}$}
			\\
		u^k(x) & \text{if $\abs{u^k(x)} < \abs{v^k(x)}$}
	\end{cases}
		\quad\text{for $x\in\Omega$ and $k=1,\dots,\ell$}
\end{equation*}
and also $w=\sum_{k=1}^\ell w^k$.
Then $\abs{w}\le\abs{v}$, whence, thanks to the well-known lattice property of the weak Lebesgue functionals
\eqref{E:weak-lebesgue-norms}, by the definition of $v$, and by~\eqref{E:out}, we get
\begin{equation} \label{E:h-upper-bound}
	\nrm{w}_{Y(\Omega)}
		\le \nrm{v}_{Y(\Omega)}
		\le \nrm{v_{i}}_{Y(\Omega)}
		\le 2^{1+\ir}\nrm{I}.
\end{equation}
Since $w^{k}-u^{k}$ equals either zero or $v^{k}-u^{k}$, one clearly has
$\abs{w^{k}-u^{k}} \le \abs{v^{k}-u^{k}}$. Furthermore, by the definition of $v^{k}$,
$v^{k}-u^{k}$ equals either zero or $v_{i}-u^{k}$, hence $\abs{v^{k}-u^{k}} \le
\abs{v_{i}-u^{k}}$. Altogether, calling into play again the lattice property of
$\|\cdot\|_{Y(\Omega)}$ and finally using \eqref{E:g-sigma-f-k}, we obtain
\begin{equation} \label{E:hk-gk}
	\nrm{w^k-u^k}_{Y(\Omega)}
		\le \nrm{v^k-u^k}_{Y(\Omega)}
		\le \nrm{v_{i}-u^k}_{Y(\Omega)}
		< \rpar
\end{equation}
for every $k=1,\dots,\ell$.

Now, assume that $Y=L^{(r,\infty)}$.
By \eqref{E:fk-lower}, there is some $t_0\in(0,\omega)$ such that
\begin{equation*}
	(u^k)^{**}(t_0)
		> (\rpar+2\varepsilon)\,t_0^{-\ir}
		\quad\text{for every $k=1,\dots,\ell$}.
\end{equation*}
It is important to notice that, thanks to~\eqref{E:same-rear}, $t_0$ is
independent of $k$.
Since the measure is non-atomic, for each
$k=1,\dots,\ell$ there exists $E^k\subset\sprt u^k$ such
that $\abs{E^k}=t_0$ and
\begin{equation*}
	\frac{1}{t_0}\int_{E^k} \abs{u^k}
		> \left(\rpar+2\varepsilon\right) t_0^{-\ir}
\end{equation*}
due to \eqref{E:maximal-alt}.  By the Hardy--Littlewood inequality
\cite[Chapter~2, Theorem~2.2]{BS:88} and~\eqref{E:hk-gk},
\begin{equation*}
	\frac{1}{t_0}\int_{E^k} \abs{u^k-w^k}
		\le (u^k-w^k)^{**}(t_0)
		\le \rpar t_0^{-\ir}
			\quad\text{for every $k=1,\dots,\ell$}.
\end{equation*}
Altogether, we have
\begin{align*}
	\frac{1}{t_0}\int_{E^k} \abs{w^k}
		& = \frac{1}{t_0}\int_{E^k} \abs{u^k-(u^k-w^k)}
		\ge \frac{1}{t_0}\int_{E^k} \abs{u^k}
				- \frac{1}{t_0}\int_{E^k} \abs{u^k-w^k}
			\\
		& > \left(\rpar + 2\varepsilon\right) t_0^{-\ir}
			- \rpar t_0^{-\ir}
		= 2\varepsilon t_0^{-\ir}
			\quad\text{for every $k=1,\dots,\ell$}.
\end{align*}
Denote $E=\bigcup_{k=1}^{\ell} E^k$. Since the sets
$E^k$ are pairwise disjoint, we get $\abs{E}=\ell t_0$ and
\begin{equation*}
	w^{**}(\ell t_0)
		\ge \frac{1}{\ell t_0}\int_{E} \abs{w}
		= \frac{1}{\ell t_0} \sum_{k=1}^{\ell}\int_{E^k} \abs{w^k}
		> 2\varepsilon t_0^{-\ir},
\end{equation*}
which gives
\begin{equation} \label{E:h-lower-bound}
	\nrm{w}_{L^{(r,\infty)}(\Omega)}
		\ge (\ell t_0)^{\ir}\, w^{**}(\ell t_0)
		> 2\varepsilon \ell^{\ir}
		> 2^{1+\ir} \nrm{I},
\end{equation}
thanks to~\eqref{E:ell-def}. Recall that $\eta=0$ in this case.
Estimate \eqref{E:h-lower-bound}
now contradicts~\eqref{E:h-upper-bound}.

Let $Y=L^{r,\infty}$ instead.
By \eqref{E:fk-lower},
there is $t_0\in(0,\omega)$ such that
\begin{equation} \label{E:fk-lower-a}
	(u^k)^{*}(t_0)
		> (\rpar+2\varepsilon)\, t_0^{-\ir}
		\quad\text{for every $k=1,\dots,\ell$}.
\end{equation}
Using \eqref{E:f*-subadditivity}, we have
for all $k=1,\dots,\ell$
\begin{align*}
	(w^k)^*(t_0-\eta t_0)
		& \ge (u^k)^*(t_0) - (u^k-w^k)^*(\eta t_0)
			> (\rpar+2\varepsilon)\, t_0^{-\ir}
				- \rpar (\eta t_0)^{-\ir}
			\\
		& = \rpar\left( 1-\eta^{-\ir} \right) t_0^{-\ir}
				+ 2\varepsilon t_0^{-\ir}
			\ge \varepsilon t_0^{-\ir},
\end{align*}
where we have used \eqref{E:hk-gk}, \eqref{E:fk-lower-a} and \eqref{E:eta-def}. Now, since $w^k$'s have pairwise disjoint supports, one has
\begin{equation*}
	w^*\bigl(\ell(1-\eta)t_0\bigr)
		\ge \varepsilon t_0^{-\ir},
\end{equation*}
whence
\begin{equation} \label{E:h-lower-bound-2}
	\nrm{w}_{L^{r,\infty}(\Omega)}
		\ge \bigl(\ell(1-\eta)t_0\bigr)^\ir w^{*}\bigr(\ell(1-\eta)t_0\bigr)
		> \varepsilon \bigl(\ell(1-\eta)\bigr)^\ir
		> 2^{1+\ir} \nrm{I},
\end{equation}
where the last inequality
is due to \eqref{E:ell-def}. Finally, \eqref{E:h-lower-bound-2}
contradicts~\eqref{E:h-upper-bound}.
\end{proof}

Theorem~\ref{T:max-non-compact-general} combined with
Proposition~\ref{P:shrinking-sobolev} leads to the following result, whose
special case immediately answers Question~\ref{Q:1}.

\begin{theorem} \label{T:max-non-compact}
Let $\Omega\subset\rn$, $n\in\N$, be open bounded and nonempty set and let
$k\in\N$, $k<n$.  Suppose that $X(\Omega)$ is either $L^{p,q}(\Omega)$ or
$L^{(p,q)}(\Omega)$ in which $p\in[1,\nk)$, $q\in[1,\infty]$, and $Y(\Omega)$
is either $L^{p^*,\infty}(\Omega)$ or $L^{(p^*,\infty)}(\Omega)$, where
$p^*=\frac{np}{n-kp}$.  Then the Sobolev embedding \eqref{E:emb} is maximally
non-compact.
\end{theorem}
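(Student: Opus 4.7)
The plan is to obtain Theorem~\ref{T:max-non-compact} as an immediate consequence of two results already proved in the paper, namely Proposition~\ref{P:shrinking-sobolev} and Theorem~\ref{T:max-non-compact-general}. The argument splits into two short verification steps.

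First, I would check that the hypotheses of Proposition~\ref{P:shrinking-sobolev} match precisely the data of the theorem: $\Omega\subset\rn$ is open, bounded and nonempty; $k\in\N$ with $k<n$; the domain space $X(\Omega)$ is one of $L^{p,q}(\Omega)$ or $L^{(p,q)}(\Omega)$ with $p\in[1,\nk)$ and $q\in[1,\infty]$; and the target $Y(\Omega)$ is either $L^{p^*,\infty}(\Omega)$ or $L^{(p^*,\infty)}(\Omega)$ with $p^*=\frac{np}{n-kp}$. These are verbatim the assumptions of the proposition, and hence the Sobolev embedding $I\colon V_0^k X(\Omega) \to Y(\Omega)$ enjoys the shrinking property.

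Second, I would apply Theorem~\ref{T:max-non-compact-general} with $r=p^*$. Since $p\in[1,\nk)$, the Sobolev exponent $p^*=\frac{np}{n-kp}$ is finite and positive, so $r\in(0,\infty)$ as required. The domain $V_0^k X(\Omega)$ is a quasinormed linear space of measurable functions, and the target $Y(\Omega)$ is one of the two weak Lebesgue spaces covered by the general theorem. Combined with the shrinking property just established, Theorem~\ref{T:max-non-compact-general} yields that $I$ is maximally non-compact.

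There is no genuine obstacle here: all of the analytic content has been carried out already. The dilation and rearrangement computations underpinning the shrinking property live inside Proposition~\ref{P:shrinking-sobolev}; the pigeonhole-based extraction of many equimeasurable bump functions with pairwise disjoint supports and the subsequent contradiction with the assumed finite $\rpar$-cover live inside Theorem~\ref{T:max-non-compact-general}. The role of Theorem~\ref{T:max-non-compact} is therefore merely to assemble these two pieces and to record the affirmative answer to Question~\ref{Q:1} as the special case $q=p$ (so that $X=L^{p,p}=L^p$ and $V_0^k X(\Omega)=V_0^{k,p}(\Omega)$), together with the Lorentz-domain and $L^{(p^*,\infty)}$-target variants.
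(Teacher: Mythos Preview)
Your proposal is correct and matches the paper's own approach exactly: the paper states Theorem~\ref{T:max-non-compact} as an immediate consequence of Theorem~\ref{T:max-non-compact-general} combined with Proposition~\ref{P:shrinking-sobolev}, with no additional argument given.
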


\section{Embeddings into the space of essentially bounded functions}
\label{S:embeddings-into-l-infty}

In this section, we shall exhibit that unlimited supply of disjointly supported
functions of the same norm (or, in particular, the shrinking property) on its
own is not enough to guarantee maximal non-compactness of an embedding. To this
end, we shall investigate embeddings into $L^{\infty}(\Omega)$ here. We begin
by introducing a new quantity assigned to such an embedding, which will prove of
substantial use later.  Let $X(\Omega)$ be a quasinormed linear space of
measurable functions defined on $\Omega$ and consider the identity operator
\begin{equation} \label{E:emb-to-ell-infty}
	I\colon X(\Omega)\to L^\infty(\Omega).
\end{equation}
We define the \emph{span} of $I$ by
\begin{equation*}
	\sigma(I) = \sup\left\{\esssup u-\essinf u:
		u\in X(\Omega), \ \nrm{u}_{X(\Omega)}\le 1\right\}.
\end{equation*}
Trivial inspection shows that
\begin{equation} \label{E:span-estimates-trivial}
	\sigma(I) \le 2\nrm{I}.
\end{equation}
We shall show that if inequality~\eqref{E:span-estimates-trivial} is strict and
the domain space has the unlimited supply (shrinking) property, then $I$ is not
maximally non-compact.  The principal idea in the background of this result is
rather neatly illustrated with the example which was mentioned in the
introductory section.  A key step to the result is the following general
assertion.  It requires a simple assumption on
embedding~\eqref{E:emb-to-ell-infty}, which reads
\begin{equation} \label{E:span-estimate-nontrivial}
	\nrm{I} \le \sigma(I).
\end{equation}
This hypothesis prevents the space $X(\Omega)$ from being too ``poor'', like
constant functions on $\Omega$, for instance.  Observe that shrinking property
is a sufficient condition for $X(\Omega)$ to obey
\eqref{E:span-estimate-nontrivial}.

\begin{proposition}\label{P:beta-upper-of-X-to-ell-infty}
Let $\Omega\subset\rn$, $n\in\N$, be a nonempty set of positive measure.
Assume that the embedding $I$ in \eqref{E:emb-to-ell-infty} obeys
\eqref{E:span-estimate-nontrivial}.  The measure of non-compactness of $I$
satisfies
\begin{equation}\label{E:I-rho}
    \bmc(I)\le \frac{\sigma(I)}{2}.
\end{equation}
\end{proposition}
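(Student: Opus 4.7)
The plan is to mimic, in this abstract setting, the finite-cover construction carried out for $I\colon\ell^p\to\ell^\infty$ in the introductory example. Writing $\sigma=\sigma(I)$, I would fix an arbitrary $\rpar>\sigma/2$ and produce a finite collection of balls of radius $\rpar$ in $L^\infty(\Omega)$ whose union contains $I(B_{X(\Omega)})$. This yields $\bmc(I)\le\rpar$, and the claim then follows by taking the infimum over such $\rpar$.

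The pivotal observation is that the two standing hypotheses localize each $u\in B_{X(\Omega)}$ near a constant function. Indeed, \eqref{E:span-estimate-nontrivial} gives $|u|\le\sigma$ almost everywhere, while the definition of $\sigma(I)$ yields $\esssup u-\essinf u\le\sigma$. Consequently the midpoint $c_u=\tfrac12(\essinf u+\esssup u)$ lies in the bounded interval $[-\sigma,\sigma]$, and $\nrm{u-c_u}_{L^\infty(\Omega)}\le\sigma/2$. Thus each $u$ is already within $L^\infty$-distance $\sigma/2$ of a constant function whose value is constrained to a fixed \emph{bounded} one-dimensional set, and it only remains to cover that set by finitely many points.

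To this end, I would choose $m\in\N$ with $\sigma/(2m)<\rpar-\sigma/2$, set $\lambda_k=k\sigma/m$ for $k=-m,\dots,m$, and take the constant functions $y^k\equiv\lambda_k$ as the centers. Then $\{\lambda_k\}_{k=-m}^m$ is a $\sigma/(2m)$-net of $[-\sigma,\sigma]$, so for each $u$ one can pick $\lambda_k$ with $|c_u-\lambda_k|\le\sigma/(2m)$; the triangle inequality in $L^\infty(\Omega)$ then yields
\begin{equation*}
	\nrm{u-y^k}_{L^\infty(\Omega)}
		\le \nrm{u-c_u}_{L^\infty(\Omega)}+|c_u-\lambda_k|
		< \tfrac{\sigma}{2}+\bigl(\rpar-\tfrac{\sigma}{2}\bigr)
		= \rpar,
\end{equation*}
showing $u\in y^k+\rpar B_{L^\infty(\Omega)}$. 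There is no genuine obstacle once the argument is split into this ``centering by $c_u$'' step (which leverages the span bound) and a subsequent ``quantization'' step; the role of~\eqref{E:span-estimate-nontrivial} is precisely to confine the admissible centers to a bounded interval, so that finitely many $y^k$ suffice. This parallels the $\ell^p\to\ell^\infty$ example, with the automatic vanishing $y_j\to0$ there (which forced $\inf y\in[-\sigma,0]$) replaced here by the simple pointwise bound $|u|\le\sigma$.
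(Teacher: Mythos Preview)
Your proof is correct and follows essentially the same strategy as the paper's: fix $\rpar>\sigma(I)/2$ and cover $I(B_{X(\Omega)})$ by finitely many $L^\infty$-balls centered at constant functions, using the span bound $\esssup u-\essinf u\le\sigma(I)$ together with the pointwise bound $|u|\le\sigma(I)$ coming from~\eqref{E:span-estimate-nontrivial}. The only cosmetic difference is that you pass through the midpoint $c_u=\tfrac12(\essinf u+\esssup u)$ and then quantize, whereas the paper performs a direct case analysis on $\essinf u$ to locate the appropriate center $\lambda_k$; these are minor variations of the same argument.
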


\begin{proof}
Suppose that $\rpar>\sigma(I)/2$ and let $m\in\N$ be such that
\begin{equation*}
    \frac{\sigma(I)}{m}<\rpar-\frac{\sigma(I)}{2}.
\end{equation*}
Set
\begin{equation*}
	\lambda_k=\frac{k\sigma(I)}{2m}
		\quad \text{for $k=-m,\dots,m$}.
\end{equation*}
Observe that
\begin{equation}\label{E:difference-between-lambdas}
	\lambda_k-\lambda_{k-1}<\rpar-\frac{\sigma(I)}{2}
		\quad \text{for $k=-m+1,\dots,m$}.
\end{equation}
Define
\begin{equation*}
	v_k(x)=\lambda_k
		\quad \text{for $x\in\Omega$ and $k=-m,\dots,m$.}
\end{equation*}
Then of course each $v_k$ belongs to $L^{\infty}(\Omega)$. Now let $u\in
B_{X(\Omega)}$. Then, by \eqref{E:span-estimate-nontrivial}, we have
\begin{equation} \label{E:u-upper-bound}
    \nrm{u}_{L^{\infty}(\Omega)}\le\nrm{I} \le \sigma(I)
\end{equation}
and, by the definition of $\sigma(I)$,
\begin{equation} \label{E:u-span}
	\esssup u - \essinf u \le \sigma(I).
\end{equation}
If $\essinf u=-\sigma(I)$, then, by \eqref{E:u-span}, $u$ essentially ranges
between $-\sigma(I)$ and $0$, whence $u\in v_{-m}+\rpar B_{L^{\infty}(\Omega)}$.
Suppose that $\essinf u\in(-\sigma(I),0]$. We find $k\in\{-m+1,\dots,m\}$ such that
\begin{equation}\label{E:estimate-of-essinf-u}
	 \essinf u + \frac{\sigma(I)}{2}\in(\lambda_{k-1},\lambda_k].
\end{equation}
Then, by~\eqref{E:estimate-of-essinf-u} and \eqref{E:u-span} again,
\begin{equation*}
	\lambda_{k} + \rpar
		> \lambda_{k} + \frac{\sigma(I)}{2}
		\ge \essinf u + \sigma(I)
		\ge \esssup u.
\end{equation*}
On the other hand, by~\eqref{E:difference-between-lambdas},
\begin{equation*}
	\essinf u
		\ge \lambda_{k-1} - \frac{\sigma(I)}{2}
		>\lambda_{k} - \rpar.
\end{equation*}
Therefore,
\begin{equation*}
	\lambda_{k} - \rpar
		< \essinf u
		\le \esssup u
		< \lambda_{k} + \rpar,
\end{equation*}
which means that $u\in v_{k} + \rpar B_{L^{\infty}(\Omega)}$.  Finally, if
$\essinf u \in(0,\sigma(I)]$, then, due to \eqref{E:u-upper-bound}, $u$
essentially takes values between $0$ and $\sigma(I)$, hence $u\in
v_m+\rpar L^\infty(\Omega)$.  This shows that $\bmc(I)< \rpar$. Since
$\rpar>\sigma(I)/2$ was arbitrary, we get~\eqref{E:I-rho}.
\end{proof}

We shall now show that in the case when $X(\Omega)$ has the unlimited supply property,
the converse inequality to~\eqref{E:I-rho} holds as well. The principal task in
such a case is, given $\rpar>\bmc(I)$, to construct a large enough family of
disjointly supported functions such that the span of the difference of each two
of them exceeds $2\rpar$.

\begin{theorem} \label{T:beta-lower-of-X-to-ell-infty}
Let $\Omega\subset\rn$, $n\in\N$, be nonempty set of positive measure.  Suppose
that $\rpar>0$ and assume that, for any $\ell\in\N$, there exist pairwise
disjointly supported functions $u_k\in X(\Omega)$,
$k=1,\dots,\ell$, satisfying
\begin{equation} \label{E:X-norm-difference}
	\nrm{u_j-u_k}_{X(\Omega)} \le 1
		\quad\text{for distinct $j,k=1,\dots,\ell$}
\end{equation}
and
\begin{equation} \label{E:u-infty-norm-large}
	2\rpar > \esssup_{\Omega} u_k > \rpar
		\quad\text{for $k=1,\dots,\ell$.}
\end{equation}
Then, the measure of non-compactness of embedding $I$ in
\eqref{E:emb-to-ell-infty} obeys
\begin{equation} \label{E:betaI-lower-bound}
	\bmc(I)\ge\rpar.
\end{equation}
\end{theorem}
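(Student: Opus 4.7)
The plan is to proceed by contradiction. Assuming $\bmc(I)<\rpar$, fix $\rpar'\in(\bmc(I),\rpar)$ and produce a finite cover $B_{X(\Omega)}\subseteq\bigcup_{k=1}^{N}(v_{k}+\rpar' B_{L^{\infty}(\Omega)})$. I then apply the hypothesis with $\ell=2^{N}+1$ to obtain pairwise disjointly supported functions $u_{1},\ldots,u_{\ell}\in X(\Omega)$ satisfying~\eqref{E:X-norm-difference} and~\eqref{E:u-infty-norm-large}. Because $\nrm{u_{i}-u_{j}}_{X(\Omega)}\le 1$, the differences with $i\neq j$ lie in the closed unit ball, and a routine enlargement of $\rpar'$ (still staying below $\rpar$) ensures each such $u_{i}-u_{j}$ belongs to at least one cover ball. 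This lets me assign to every pair $(i,j)$ with $1\le i<j\le\ell$ a \emph{color} $c(i,j)\in\{1,\ldots,N\}$ such that $u_{i}-u_{j}\in v_{c(i,j)}+\rpar' B_{L^{\infty}(\Omega)}$.

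The first step is an analytic constraint: for every triple $i<j<k$ one must have $c(i,j)\neq c(j,k)$. Indeed, pairwise disjointness of the supports forces $u_{i}-u_{j}\equiv -u_{j}$ and $u_{j}-u_{k}\equiv u_{j}$ on $\sprt u_{j}$. If $c(i,j)=c(j,k)=c$, then both $-u_{j}$ and $u_{j}$ approximate the same $v_{c}$ to within $\rpar'$ on $\sprt u_{j}$, and the triangle inequality there yields $2\abs{u_{j}}\le 2\rpar'$ a.e., whence $\esssup_{\Omega} u_{j}\le\rpar'<\rpar$, contradicting~\eqref{E:u-infty-norm-large}.

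The second step, which is the heart of the proof, is the combinatorial argument illustrated by Figure~\ref{fig:coloring}. For $i\in\{1,\ldots,\ell-1\}$ set the \emph{row color set} $R_{i}=\{c(i,k):i<k\le\ell\}$ and for $j\in\{2,\ldots,\ell\}$ the \emph{column color set} $C_{j}=\{c(i,j):1\le i<j\}$. The first step reads precisely $R_{j}\cap C_{j}=\emptyset$ for every $j\in\{2,\ldots,\ell-1\}$. I claim the $\ell-1$ sets $R_{1},\ldots,R_{\ell-1}$ are pairwise distinct and nonempty: nonempty because each index $i\le\ell-1$ has at least one partner $k>i$, and distinct because $R_{i}=R_{j}$ with $i<j\le\ell-1$ would place $c(i,j)$ simultaneously in $R_{j}$ and in $C_{j}$, contradicting their disjointness. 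Hence $\ell-1\le 2^{N}-1$, which contradicts $\ell=2^{N}+1$.

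The main hurdle is the combinatorial step: the trick is to encode each vertex $i$ by its row color set $R_{i}$, use the disjointness $R_{j}\cap C_{j}=\emptyset$ to force these encodings to be distinct, and then let a pure cardinality count finish the job. The analytic step then reduces to a short triangle-inequality argument on $\sprt u_{j}$, while the passage from the open to the closed unit ball in $X(\Omega)$ is a standard technicality.
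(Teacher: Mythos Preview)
Your proof is correct and follows essentially the same route as the paper's: the same coloring of the pairs $(i,j)$ by the covering balls, the same analytic obstruction that $c(i,j)\neq c(j,k)$ for $i<j<k$ (the paper phrases it as $\nrm{w_{i,j}-w_{j,j'}}_{L^\infty}>2\rpar$), and the same combinatorial punchline that the row color sets $R_i$ must be pairwise distinct nonempty subsets of the color set, forcing $\ell-1\le 2^{N}-1$. Your handling of the open-versus-closed unit ball issue via an intermediate radius $\rpar'$ is in fact more careful than the paper, which tacitly places the $w_{i,j}$ in $B_{X(\Omega)}$.
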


\begin{proof}
Assume that~\eqref{E:betaI-lower-bound} is not satisfied, \ie that
$\bmc(I)<\rpar$.  This means that there exist $m\in\N$ and a collection
$\{v_1,\dots,v_{m-1}\}\subset L^{\infty}(\Omega)$ such that
\begin{equation} \label{E:m-1-covering}
    B_{X(\Omega)}
			\subset\bigcup_{k=1}^{m-1}\left(v_k+\rpar B_{L^{\infty}(\Omega)}\right).
\end{equation}
Set $\ell=2^m$ and let $u_1,\dots,u_\ell$ be the sequence guaranteed by the
assumption of the theorem. Define
\begin{equation*}
    w_{i,j} = u_i - u_j
		\quad\text{for $1\le i<j\le \ell$}.
\end{equation*}
It follows from \eqref{E:X-norm-difference} that $w_{i,j}\in B_{X(\Omega)}$
and, due to \eqref{E:u-infty-norm-large},
\begin{equation}\label{E:distinction}
	\nrm{w_{i,j}-w_{i',j'}}_{L^{\infty}(\Omega)} > 2\rpar
	\quad\text{if and only if $i=j'$ or $j=i'$}
\end{equation}
for admissible indices.  Now, let $W_1,\dots,W_{m-1}$ be arbitrary pairwise
disjoint partitioning of the set
$\{w_{i,j}: 1\le i<j\le \ell\}$ satisfying
\begin{equation*}
	w_{i,j}\in W_k
		\quad\text{if}\quad
 	w_{i,j}\in v_k+\rpar B_{L^\infty(\Omega)}
	\quad\text{for $k=1,\dots,m-1$,}
\end{equation*}
which is possible due to \eqref{E:m-1-covering} and the fact that each
$w_{i,j}$ belongs to $B_X(\Omega)$.  Observe that if two functions $w$ and
$\tilde w$ share the same class, then there exists $k\in\{1,\dots,m-1\}$ such
that both $w$ and $\tilde w$ belong to the ball $v_k+\rpar B_{L^\infty(\Omega)}$,
whence
\begin{equation} \label{E:share-the-same-class}
    \nrm{w-\tilde w}_{L^{\infty}(\Omega)} \le 2\rpar.
\end{equation}
Our goal now is to show that such a partitioning is impossible with less than
$m$ classes, which would lead to a contradiction.

To have a better understanding of this setup, imagine that every $w_{i,j}$ is
represented by a field $(i,j)$ in a grid $2^m\times 2^m$. Thanks to the
constraint $1\le i<j\le 2^m$, we deal just with the lower triangle under the
diagonal (see Figure~\ref{fig:coloring:sufficiency}). A membership of $w_{i,j}$
to the class $W_c$ may be represented as a coloring of the corresponding field
$(i,j)$ by the color $c$.  The partitioning condition together with
\eqref{E:distinction} and~\eqref{E:share-the-same-class} translates to a simple
constraint: $i$-th line cannot share any color with $i$-th column for any
$i=1,2,\dots,2^m-1$.  The task is in showing that at least $m$ colors are
needed.

To this end, for each row $i\in\{1,\dots,2^m-1\}$, consider the sets
\begin{equation*}
    C_i=\{c: w_{i,j}\in W_c \text{ for some } i<j\le 2^m\},
\end{equation*}
that is, the colors contained in the $i$-th row.  We show that these sets need
to be distinct across rows.  Let $1\le i<j\le 2^m$ be given and let $c$ be the
class index such that  $w_{i,j}\in W_c$. Obviously $c\in C_i$. We claim that
$c\notin C_j$. Indeed, assume that there is some $j'$ such that $w_{j,j'}\in
W_c$. Then both $w_{i,j}$ and $w_{j,j'}$ share the same class (see
Figure~\ref{fig:coloring:necessity}) which means that
$\nrm{w_{i,j}-w_{j,j'}}_{L^{\infty}(\Omega)} \le 2\rpar$, due to
\eqref{E:share-the-same-class}.  This however
contradicts~\eqref{E:distinction} and therefore $C_i\neq C_j$. Since $i$ and $j$
were chosen arbitrarily, this shows that the family $\{C_1,\dots,C_{2^m-1}\}$
constitutes a collection of pairwise distinct nonempty subsets of the set of
all classes $\{1,\dots,m-1\}$, which is not possible.
\end{proof}

In order to apply Theorem~\ref{T:beta-lower-of-X-to-ell-infty} to the Sobolev
embedding \eqref{E:emb-limiting}, we need to show that the domain Sobolev space
has the unlimited supply of functions satisfying~\eqref{E:X-norm-difference}
and~\eqref{E:u-infty-norm-large}. To this end, we shall employ the shrinking
property.

\begin{proposition} \label{P:shrinking-emb-limiting}
Let $n\in\N$, $k\in\N$, $k\le n$, and let $\Omega\subset\rn$ be open bounded
and nonempty set.
Then the embedding from~\eqref{E:emb-limiting} has shrinking property.
\end{proposition}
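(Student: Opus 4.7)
The plan is to mirror the strategy of Proposition~\ref{P:shrinking-sobolev}: given a nonempty open $G\subset\Omega$, sandwich it between two concentric balls $B_1\subset G\subset\Omega\subset B_2$, and then exploit a suitable dilation to transfer functions between $B_1$ and $B_2$ so that the ratio $\|u\|_{L^\infty}/\|u\|_{V_0^k L^{n/k,1}}$ is preserved.

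First I would repeat the three-step inclusion chain used in Proposition~\ref{P:shrinking-sobolev}: by zero-extension, every $u\in V_0^k L^{n/k,1}(B_1)$ extends to an element of $V_0^k L^{n/k,1}(\Omega)$ supported in $G$, and every such element further extends to $V_0^k L^{n/k,1}(B_2)$, with the analogous extension being isometric for the $L^\infty$-target as well. This yields
\begin{equation*}
\sup_{u\neq 0}\frac{\|u\|_{L^\infty(B_1)}}{\|u\|_{V_0^k L^{n/k,1}(B_1)}}
\le \sup_{\substack{u\neq 0\\\sprt u\subset G}}\frac{\|u\|_{L^\infty(\Omega)}}{\|u\|_{V_0^k L^{n/k,1}(\Omega)}}
\le \|I\|
\le \sup_{u\neq 0}\frac{\|u\|_{L^\infty(B_2)}}{\|u\|_{V_0^k L^{n/k,1}(B_2)}},
\end{equation*}
so it suffices to prove that the leftmost and rightmost suprema agree.

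Next, by translation invariance I can assume $B_1$ and $B_2$ share their center at the origin. Setting $\kappa=r_2/r_1$ and $u_\kappa(x)=u(\kappa x)$ for $x\in B_1$, I would reuse the rearrangement identities \eqref{E:star} and \eqref{E:D-star}, namely $(u_\kappa)^*(t)=u^*(\kappa^n t)$ and $(D^\beta u_\kappa)^*(t)=\kappa^{|\beta|}(D^\beta u)^*(\kappa^n t)$. The crucial observation is that in the limiting case $p=n/k$ the homogeneity exponent $k-n/p$ appearing in \eqref{E:norms-u-hatu} and \eqref{E:norms-u-hatu2} vanishes, so both the domain and target norms are \emph{scale-invariant}. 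Concretely, $\|u_\kappa\|_{L^\infty(B_1)}=\|u\|_{L^\infty(B_2)}$ is immediate, while for $|\beta|=k$ the change of variables $s=\kappa^n t$ gives
\begin{equation*}
\|D^\beta u_\kappa\|_{L^{n/k,1}(B_1)}
=\int_0^{|B_1|}t^{k/n-1}\kappa^k (D^\beta u)^*(\kappa^n t)\,\d t
=\int_0^{|B_2|}s^{k/n-1}(D^\beta u)^*(s)\,\d s
=\|D^\beta u\|_{L^{n/k,1}(B_2)},
\end{equation*}
using $\kappa^n|B_1|=|B_2|$ and $\kappa^k\cdot\kappa^{n-k}\cdot\kappa^{-n}=1$. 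Summing over $|\beta|=k$ yields $\|u_\kappa\|_{V_0^k L^{n/k,1}(B_1)}=\|u\|_{V_0^k L^{n/k,1}(B_2)}$, and hence the two outer suprema in the display above coincide.

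I do not expect any genuinely hard step: the argument is essentially a cleaner version of Proposition~\ref{P:shrinking-sobolev}, because the critical exponent $p=n/k$ forces exact scale invariance rather than mere homogeneity. The only mild point worth checking is that the extension-by-zero of a $V_0^k L^{n/k,1}$-function across a measure-zero boundary preserves $k$-fold weak differentiability and the Lorentz norm of the derivatives, but this is standard and is precisely the property already invoked in the proof of Proposition~\ref{P:shrinking-sobolev}.
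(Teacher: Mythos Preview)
Your proposal is correct and follows essentially the same approach as the paper's proof: both sandwich $G$ between concentric balls $B_1\subset G\subset\Omega\subset B_2$, use the dilation $u_\kappa(x)=u(\kappa x)$, and observe that at the critical exponent $p=n/k$ the scaling factor $\kappa^{k-n/p}$ becomes $1$, so both the $L^\infty$ target norm and the $V_0^kL^{n/k,1}$ domain norm are preserved, making the ratio over $B_1$ equal to that over $B_2$. The paper abbreviates the Lorentz-norm computation to ``by a simple computation'' and refers back to Proposition~\ref{P:shrinking-sobolev}, whereas you spell it out explicitly, but the argument is the same.
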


\begin{proof}
Let $G\subset\Omega$ be nonempty open set. Suppose that $B_1$ and $B_2$ are concentric balls
such that $B_1\subset G\subset\Omega\subset B_2$. By the translation invariance,
we may assume that both the balls are centered at the origin.
Let $r_1$ and $r_2$ denote their radii
and set $\kappa=r_2/r_1$.
Given $u\in V^{k}_0L^{n/k,1}(B_2)$, define $u_\kappa(x)=u(\kappa x)$ for $x\in B_1$.
Then we have
\begin{equation*}
    \nrm{u_\kappa}_{L^{\infty}(B_1)}=\nrm{u}_{L^{\infty}(B_2)}
\end{equation*}
and, by a simple computation,
\begin{equation*}
    \nrm{u_\kappa}_{V^{k}_0L^{\nk,1}(B_1)}=\nrm{u}_{V^{k}_0L^{\nk,1}(B_2)}.
\end{equation*}
The assertion then follows by the same argument as in
Proposition~\ref{P:shrinking-sobolev}.
\end{proof}

Our next step will be exact evaluation of the span of the embedding operator
$I$ from~\eqref{E:emb-limiting}.

\begin{proposition}\label{P:span-of-emb-limiting}
Let $n,k\in\N$, $k\le n$, let $\Omega\subset\rn$ be open bounded and nonempty
set and let $I$ denote the embedding
from~\eqref{E:emb-limiting}. Then
\begin{equation}\label{E:comparison-of-I-and-L}
    \sigma(I) = 2^{1-\kn} \nrm I.
\end{equation}
\end{proposition}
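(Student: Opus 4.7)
The plan is to prove the two inequalities $\sigma(I) \ge 2^{1-\kn}\|I\|$ and $\sigma(I) \le 2^{1-\kn}\|I\|$ separately.

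For the lower bound, I would exploit the shrinking property from Proposition~\ref{P:shrinking-emb-limiting}. Given $\epsilon > 0$, choose two disjoint open balls $B_1, B_2 \subset \Omega$ of equal radius and, using shrinking, produce $v_1 \in V^k_0 L^{\nk,1}(\Omega)$ with $\sprt v_1 \subset B_1$, $\|v_1\|_{V^k_0 L^{\nk,1}} = 1$, and $\|v_1\|_{L^\infty} > \|I\| - \epsilon$. Let $v_2$ be the congruent translate of $v_1$ inside $B_2$ and set $u = 2^{-\kn}(v_1 - v_2)$. The key computation is that, for each multi-index $\multiindex$ with $|\multiindex|=k$, the derivatives $D^\multiindex v_1$ and $D^\multiindex v_2$ are disjointly supported and equimeasurable, so $(|D^\multiindex v_1|+|D^\multiindex v_2|)^*(t) = |D^\multiindex v_1|^*(t/2)$ and a change of variables yields $\|D^\multiindex(v_1-v_2)\|_{L^{\nk,1}} = 2^{\kn}\|D^\multiindex v_1\|_{L^{\nk,1}}$. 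Summing over $\multiindex$ gives $\|u\|_{V^k_0 L^{\nk,1}} = 1$, while $\esssup u - \essinf u = 2^{1-\kn}\|v_1\|_{L^\infty} > 2^{1-\kn}(\|I\|-\epsilon)$; letting $\epsilon\to 0$ closes the bound.

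For the upper bound, given $u$ with $\|u\|_{V^k_0 L^{\nk,1}} \le 1$, set $\alpha = \esssup u$, $\beta = -\essinf u$. The case $\alpha\beta = 0$ is immediate, since then $\alpha+\beta = \|u\|_{L^\infty} \le \|I\| \le 2^{1-\kn}\|I\|$ (using $k\le n$). For $\alpha,\beta > 0$, the plan is a splitting argument dual to the lower bound: decompose $u = u_+ - u_-$ into its positive and negative parts (supported on $\{u>0\}$ and $\{u<0\}$ respectively), apply the Sobolev embedding to each piece to obtain $\alpha \le \|I\|\|u_+\|_{V^k_0 L^{\nk,1}}$ and $\beta \le \|I\|\|u_-\|_{V^k_0 L^{\nk,1}}$, and combine using the concavity inequality $a^{\kn}+b^{\kn} \le 2^{1-\kn}(a+b)^{\kn}$ valid for $a,b\ge 0$ (since $k/n \le 1$). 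Applied to the distribution functions of $|D^\multiindex u_+|$ and $|D^\multiindex u_-|$ (whose sum equals the distribution function of $|D^\multiindex u|$ by disjointness of supports) and integrated against $d\lambda$ in the Lorentz-norm formula $\|f\|_{L^{\nk,1}} = (\nk)\int_0^\infty \mu_f(\lambda)^{\kn}\,d\lambda$, this delivers $\|D^\multiindex u_+\|_{L^{\nk,1}}+\|D^\multiindex u_-\|_{L^{\nk,1}} \le 2^{1-\kn}\|D^\multiindex u\|_{L^{\nk,1}}$, which when summed over $\multiindex$ yields $\alpha+\beta \le 2^{1-\kn}\|I\|$.

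The main obstacle is that for $k \ge 2$ the positive and negative parts $u_\pm$ typically fail to lie in $V^k_0$ of the subdomains $\{u\gtrless 0\}$ because their odd-order normal derivatives across the nodal set $\{u=0\}$ need not vanish, so the Sobolev embedding cannot be applied to $u_\pm$ in the stated form. I expect this to be handled by first performing an antisymmetrization, replacing $u$ by $\tilde u(x) = \tfrac12(u(x)-u(R(x)))$ where $R$ reflects across the perpendicular bisecting hyperplane of an argmax-argmin pair---an operation that does not increase $\|u\|_{V^k_0 L^{\nk,1}}$ by the triangle inequality and translation invariance of $\|\cdot\|_{L^{\nk,1}}$, while ensuring $\esssup\tilde u - \essinf\tilde u \ge \alpha+\beta$ and forcing $u_\pm$ to be mutual reflections. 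A density/smoothing argument (making $\{u=0\}$ a regular hypersurface by Sard) should then legitimize the splitting; alternatively one can bypass the subdomain Sobolev embedding by passing to the Riesz-potential representation $u = c_{n,k} I_k * f$ with kernel $K(y)=|y|^{k-n}$ and computing via Lorentz duality and a scaling argument that $\|K(x_1-\cdot)-K(x_2-\cdot)\|_{L^{n/(n-k),\infty}} = 2^{1-\kn}\|K\|_{L^{n/(n-k),\infty}}$ for any distinct $x_1,x_2$, which directly produces the sharp factor.
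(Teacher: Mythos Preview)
Your lower bound is correct and coincides with the paper's construction. For the upper bound, the skeleton---split $u=u_+-u_-$, bound $\esssup u$ and $-\essinf u$ by $\nrm{I}$ times the respective Sobolev norms, then combine via $a+b\le 2^{1-\kn}(a^{\nk}+b^{\nk})^{\kn}$ together with disjoint superadditivity of $L^{\nk,1}$ with exponent $\nk$---is exactly the paper's; your distribution-function calculation is a self-contained proof of that superadditivity constant, which the paper instead quotes from~\cite{Bou:19}.

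You correctly locate the obstacle, but neither proposed fix works as stated. The antisymmetrization $\tilde u(x)=\tfrac12\bigl(u(x)-u(Rx)\bigr)$ fails on a general bounded $\Omega$ because the reflection $R$ need not preserve $\Omega$, so $u\circ R\notin V_0^kL^{\nk,1}(\Omega)$; one could enlarge the domain and invoke shrinking, but odd symmetry across a hyperplane still does not place $\tilde u_\pm$ into $V_0^k$ for $k\ge 2$---the normal derivatives of order $<k$ across the nodal set are no better than before, and Sard-type regularization of $\{u=0\}$ does not address this. The Riesz-potential alternative is a wholesale replacement rather than a repair, and the identification of $V_0^kL^{\nk,1}(\Omega)$ with global Riesz potentials on $\rn$ is not available on a bounded domain, so the sketched duality computation does not apply directly.

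The paper resolves the obstacle more simply, by mollification: set $u_\varepsilon^\pm=\psi_\varepsilon*u^\pm$ and $u_\varepsilon=\psi_\varepsilon*u=u_\varepsilon^+-u_\varepsilon^-$ on the enlarged domain $\Omega_\varepsilon=\Omega+B_\varepsilon$. Then $u_\varepsilon^\pm\in C_c^\infty(\Omega_\varepsilon)\subset V_0^kL^{\nk,1}(\Omega_\varepsilon)$, so the embedding (with the same norm $\nrm{I}$, by Proposition~\ref{P:shrinking-emb-limiting}) applies legitimately to each piece; one runs the inequality chain for $u_\varepsilon$ and lets $\varepsilon\to 0_+$. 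This is the missing ingredient in your outline.
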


\newcommand{\dom}[1][\Omega]{V_0^kL^{\nk,1}(#1)}
\begin{proof}
Let us show the inequality ``$\le$''. Let $u\in V_0^kL^{n/k,1}(\Omega)$.  Then
$u=u^+ - u^-$ where $u^+$ and $u^-$ are the positive and the negative part of $u$,
respectively. Fix $\varepsilon>0$.  Let $\psi_{\varepsilon}$ be the standard
mollification kernel supported in an open set $B_\varepsilon$ and let
$u_{\varepsilon}=\psi_{\varepsilon}*u$,
$u_{\varepsilon}^+=\psi_{\varepsilon}*u^+$ and
$u_{\varepsilon}^-=\psi_{\varepsilon}*u^-$.
Denote $\Omega_\varepsilon=\Omega+B_\varepsilon$.
Then both $u_{\varepsilon}^+$ and
$u_{\varepsilon}^-$ are supported in $\Omega_\varepsilon$ and
belong to $V_0^kL^{n/k,1}(\Omega_\varepsilon)$.
If we denote
\begin{equation} \label{E:emb-limiting-epsilon}
	I_\varepsilon\colon
		\dom[\Omega_\varepsilon]
			\to L^{\infty}(\Omega_\varepsilon),
\end{equation}
then, by shrinking property of \eqref{E:emb-limiting-epsilon} ensured by
Proposition~\ref{P:shrinking-emb-limiting}
\begin{equation} \label{E:Iepsilon-I}
	\nrm{I_\varepsilon} = \nrm{I},
\end{equation}
as $\Omega$ is an open subset of $\Omega_\varepsilon$.
Also, by linearity,
$u_{\varepsilon}=u_{\varepsilon}^+-u_{\varepsilon}^-$.  Note that, since $u$
vanishes at the boundary of $\Omega$,
\begin{equation*}
    \essinf u_{\varepsilon} \le 0 \le \esssup u_{\varepsilon}.
\end{equation*}
Moreover, one has
\begin{equation*}
	\esssup u_{\varepsilon}
		= \nrm{u_{\varepsilon}^+}_{L^\infty(\Omega)}
		\le \nrm{I_\varepsilon}\nrm{u_{\varepsilon}^+}_{\dom[\Omega_\varepsilon]}
		\le \nrm{I}\nrm{u_{\varepsilon}^+}_{\dom[\Omega_\varepsilon]}
\end{equation*}
and
\begin{equation*}
	- \essinf u_{\varepsilon}
		= \nrm{u_{\varepsilon}^-}_{L^\infty(\Omega)}
		\le \nrm{I_\varepsilon} \nrm{u_{\varepsilon}^-}_{\dom[\Omega_\varepsilon]}
		\le \nrm{I} \nrm{u_{\varepsilon}^-}_{\dom[\Omega_\varepsilon]},
\end{equation*}
where we used \eqref{E:Iepsilon-I}.
Summing both inequalities up, we get
\begin{equation} \label{E:sup-inf}
	\esssup u_{\varepsilon} - \essinf u_{\varepsilon}
		\le \nrm I \sum_{\abs{\multiindex}=k}
			\left(
				\nrm{D^\multiindex u_{\varepsilon}^+}_{L^{\nk,1}(\Omega_\varepsilon)}
				+ \nrm{D^\multiindex u_{\varepsilon}^-}_{L^{\nk,1}(\Omega_\varepsilon)}
			\right),
\end{equation}
as the definition of the Sobolev norm yields. Next, since $n/k\ge 1$, the
elementary inequality
\begin{equation} \label{E:elementary-ineq}
	a + b \le 2^{1-\kn} \left( a^\nk + b^\nk \right)^\kn
\end{equation}
holds for any non-negative $a$ and $b$. Therefore, applying \eqref{E:elementary-ineq} to
\begin{equation*}
	a = \nrm{D^\multiindex u_{\varepsilon}^+}_{L^{\nk,1}(\Omega_\varepsilon)}
		\quad\text{and}\quad
	b = \nrm{D^\multiindex u_{\varepsilon}^-}_{L^{\nk,1}(\Omega_\varepsilon)},
\end{equation*}
we obtain from~\eqref{E:sup-inf}
\begin{equation} \label{E:sup-inf-new}
	\esssup u_{\varepsilon} - \essinf u_{\varepsilon}
		\le 2^{1-\kn}\nrm I \sum_{\abs{\multiindex}=k}
			\left(
				\nrm{D^\multiindex u_{\varepsilon}^+}_{L^{\nk,1}(\Omega_\varepsilon)}^\nk
				+ \nrm{D^\multiindex u_{\varepsilon}^-}_{L^{\nk,1}(\Omega_\varepsilon)}^\nk
			\right)^\kn.
\end{equation}
By a particular case of~\cite[Proposition~2.5]{Bou:19}, the Lorentz space
$L^{\nk,1}(\Omega_\varepsilon)$ is disjointly superadditive with power
$n/k$, whence we get, for each multiindex $\multiindex$,
\begin{equation}\label{E:Dalpha-super-addition}
	\nrm{D^\multiindex u_{\varepsilon}^+}_{L^{\nk,1}(\Omega_\varepsilon)}^\nk
			+ \nrm{D^\multiindex u_{\varepsilon}^-}_{L^{\nk,1}(\Omega_\varepsilon)}^\nk
		\le \nrm{D^\multiindex u_{\varepsilon}}_{L^{\nk,1}(\Omega_\varepsilon)}^\nk.
\end{equation}
Altogether, \eqref{E:sup-inf-new} and \eqref{E:Dalpha-super-addition} yield
\begin{equation*}
	\esssup u_{\varepsilon} - \essinf u_{\varepsilon}
		\le  2^{1-\kn} \nrm I \nrm{u_{\varepsilon}}_{\dom[\Omega_\varepsilon]}.
\end{equation*}
The desired inequality now follows on letting $\varepsilon\to0_+$.

To show the converse inequality, let $\varepsilon>0$ be given
and suppose that $B_1$ and $B_2$ are two disjoint balls of
the same measure contained in $\Omega$ and let $u_1$ be a function
supported in $B_1$ such that $\nrm{u_1}_{L^\infty(\Omega)}\ge\nrm{I}-\varepsilon$
and $\nrm{u}_{{V_0^kL^{n/k,1}(\Omega)}}=1$.
The existence of such a function is guaranteed by shrinking property
of \eqref{E:emb-limiting} due to Proposition~\ref{P:shrinking-emb-limiting}.
Denote by $u_2$ the shift of $u_1$ onto
the domain $B_2$ and define $v\colon\Omega\to\R$ by
\begin{equation} \label{E:u1-u2}
	v = u_1\chi_{B_1} - u_2 \chi_{B_2}.
\end{equation}
Then $v\in\dom$ and
\begin{equation} \label{E:sup-inf-2}
	\esssup v - \essinf v
		\ge 2(\nrm{I} - \varepsilon).
\end{equation}
Next, observe that
\begin{equation*}
	\abs{D^\multiindex v}^*(t) =
	\begin{cases}
		\abs{D^\multiindex u_1}^*\left( \tfrac t2 \right)
			&\text{if $t\in(0,2\abs{B_1})$}
			\\
		0
			&\text{if $t\in(2\abs {B_1}, \abs\Omega)$}
	\end{cases}
\end{equation*}
for every multiindex $\multiindex$.
This implies
\begin{align} \label{E:norm-of-v}
	\begin{split}
	\nrm{v}_{\dom}
		& = \sum_{\abs\multiindex=k} \int_{0}^{2\abs {B_1}}
			\abs{D^\multiindex  v}^*(t)\, t^{\kn-1}\,\d t
			= \sum_{\abs\multiindex=k} \int_{0}^{2\abs {B_1}}
			\abs{D^\multiindex  u_1}^*\left(\tfrac t2\right)\, t^{\kn-1}\,\d t
			\\
		& = 2^\kn \sum_{\abs\multiindex=k} \int_{0}^{\abs {B_1}}
			\abs{D^\multiindex  u_1}^*(s)\, s^{\kn-1}\,\d s
			= 2^\kn \nrm{u_1}_{\dom}
			= 2^\kn
	\end{split}
\end{align}
and \eqref{E:sup-inf-2} together with \eqref{E:norm-of-v}
gives
\begin{equation*}
	\esssup v-\essinf v
		\ge 2^{1-\kn}(\nrm{I}-\varepsilon)\nrm{v}_{\dom}.
\end{equation*}
The desired lower bound for $\sigma(I)$ then follows by sending $\varepsilon \to 0_+$.
\end{proof}

Finally, we will show that, for~\eqref{E:emb-limiting}, there is equality
in~\eqref{E:I-rho}, that is,
\begin{equation*}
    \bmc(I)=\frac{\sigma(I)}{2}.
\end{equation*}
Combined with~\eqref{E:comparison-of-I-and-L}, this provides us with an exact evaluation of the measure of non-compactness of $I$ in~\eqref{E:emb-limiting}. In particular, this yields a negative answer to Question~\ref{Q:3}.

\begin{theorem} \label{T:beta-of-emb-limiting}
Let $n,k\in\N$, $k\le n$, let $\Omega\subset\rn$ be open bounded and nonempty set
and
let $I$ denote the embedding from \eqref{E:emb-limiting}.
Then
\begin{equation*}
    \bmc(I)=2^{-\kn}\nrm{I}.
\end{equation*}
In particular, $I$ is not maximally non-compact.
\end{theorem}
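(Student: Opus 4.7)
The plan is to pin down $\bmc(I)$ by establishing matching upper and lower bounds, both equal to $2^{-\kn}\nrm{I}$, using the machinery already developed in this section. The upper bound is essentially for free: Proposition~\ref{P:shrinking-emb-limiting} supplies the shrinking property of $I$, which in particular ensures the nontriviality hypothesis $\nrm{I}\le\sigma(I)$ of Proposition~\ref{P:beta-upper-of-X-to-ell-infty}. Invoking that proposition together with the exact identity $\sigma(I)=2^{1-\kn}\nrm{I}$ from Proposition~\ref{P:span-of-emb-limiting} immediately gives $\bmc(I)\le\sigma(I)/2=2^{-\kn}\nrm{I}$.

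The substantive task is the matching lower bound, which will come from Theorem~\ref{T:beta-lower-of-X-to-ell-infty} applied with $\rpar$ approaching $2^{-\kn}\nrm{I}$ from below. Fix a small $\varepsilon>0$ and $\ell\in\N$. Inside $\Omega$ select $\ell$ pairwise disjoint open balls $B_1,\dots,B_\ell$ of the same radius. By the shrinking property of Proposition~\ref{P:shrinking-emb-limiting}, pick $v\in V_0^kL^{\nk,1}(\Omega)$ supported in $B_1$ with $\nrm{v}_{V_0^kL^{\nk,1}(\Omega)}=1$ and $\nrm{v}_{L^\infty(\Omega)}>\nrm{I}-\varepsilon$, and after replacing $v$ by $-v$ if necessary assume $\esssup v>\nrm{I}-\varepsilon$. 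Then set $u_1=2^{-\kn}v$ and define $u_2,\dots,u_\ell$ as translates of $u_1$ whose supports sit in $B_2,\dots,B_\ell$, respectively.

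The key calibration is the verification that $\nrm{u_j-u_k}_{V_0^kL^{\nk,1}(\Omega)}=1$ for distinct $j,k$. Since the $u_j$ are pairwise disjointly supported equimeasurable translates of each other, the same is true of each $D^\beta u_j$, whence $\abs{D^\beta u_j-D^\beta u_k}=\abs{D^\beta u_j}+\abs{D^\beta u_k}$ pointwise and the distribution function of this sum equals twice that of $\abs{D^\beta u_1}$. Therefore $(D^\beta u_j-D^\beta u_k)^*(t)=(D^\beta u_1)^*(t/2)$ for $t>0$, and a change of variables produces $\nrm{D^\beta u_j-D^\beta u_k}_{L^{\nk,1}(\Omega)}=2^\kn\nrm{D^\beta u_1}_{L^{\nk,1}(\Omega)}$; summing over $\abs{\beta}=k$ gives $\nrm{u_j-u_k}_{V_0^kL^{\nk,1}(\Omega)}=2^\kn\cdot 2^{-\kn}=1$. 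Meanwhile $\esssup u_k=2^{-\kn}\esssup v>2^{-\kn}(\nrm{I}-\varepsilon)$, while $\esssup u_k\le\nrm{u_k}_{L^\infty(\Omega)}\le\nrm{I}\nrm{u_k}_{V_0^kL^{\nk,1}(\Omega)}=2^{-\kn}\nrm{I}$. Choosing $\rpar$ slightly less than $2^{-\kn}(\nrm{I}-\varepsilon)$ secures both $\rpar<\esssup u_k<2\rpar$ (the upper inequality holding once $\varepsilon<\nrm{I}/2$), so Theorem~\ref{T:beta-lower-of-X-to-ell-infty} yields $\bmc(I)\ge\rpar$; sending $\varepsilon\to 0_+$ completes the lower bound. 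The main subtlety is that the precise factor $2^\kn$ emerging from the disjoint-support rearrangement calculation must be balanced by the $2^{-\kn}$ scaling of $u_1$, which in turn matches the span formula from Proposition~\ref{P:span-of-emb-limiting} exactly; the strict inequality $2^{-\kn}<1$ then immediately delivers the concluding assertion that $I$ is not maximally non-compact.
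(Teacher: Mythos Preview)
Your proof is correct and follows essentially the same approach as the paper's: the upper bound via Propositions~\ref{P:beta-upper-of-X-to-ell-infty} and~\ref{P:span-of-emb-limiting}, and the lower bound by constructing scaled disjoint translates $u_j=2^{-\kn}v_j$ to feed into Theorem~\ref{T:beta-lower-of-X-to-ell-infty}, with the factor $2^{\kn}$ emerging from the rearrangement computation exactly as in~\eqref{E:norm-of-v}. If anything, you are slightly more careful than the paper in two places: you explicitly replace $v$ by $-v$ to guarantee that $\esssup v$ (not merely $\nrm{v}_{L^\infty}$) is large, and you verify the upper inequality $\esssup u_k<2\rpar$ of~\eqref{E:u-infty-norm-large}, which the paper leaves implicit.
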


\begin{proof}
Due to Propositions~\ref{P:beta-upper-of-X-to-ell-infty}
and~\ref{P:span-of-emb-limiting}, we only need to
prove that
\begin{equation}\label{E:beta-lower}
    \bmc(I)\ge 2^{-\kn} \nrm{I}.
\end{equation}
Let $\rpar>0$ obey $2^{-k/n}\nrm{I}>\rpar$. We show that then necessarily
$\bmc(I)\ge\rpar$ and \eqref{E:beta-lower} follows.
Thanks to Theorem~\ref{T:beta-lower-of-X-to-ell-infty},
it suffices to find the set of eligible functions
satisfying assumptions \eqref{E:X-norm-difference} and \eqref{E:u-infty-norm-large}.
Let $\ell\in\N$ be given.
Denote by $B_1, B_2,\dots,B_{\ell}$ pairwise disjoint
balls of the same volume and all contained in $\Omega$.
By the shrinking property of embedding~\eqref{E:emb-limiting}
ensured by Proposition~\ref{P:shrinking-emb-limiting},
there is a function $v_1\colon B_1\to\R$ supported in $B_1$ such
that
\begin{equation*}
	\nrm{v_1}_{V^{k}_0L^{\nk,1}(\Omega)}
		= 1
		\quad\text{and}\quad
	\nrm{v_1}_{L^{\infty}(\Omega)}
		> \rpar 2^{\kn},
\end{equation*}
since $\nrm{I}>\rpar 2^{k/n}$.
For each $j=2,\dots,\ell$, let $v_j$ denote a shifted copy of $v_1$ supported
on $B_j$.
Then, due to \eqref{E:u1-u2} and \eqref{E:norm-of-v},
we have $\nrm{v_i-v_j}=2^{k/n}$ for distinct $i,j=1,\dots,\ell$,
whence the functions $u_j=2^{-k/n}v_j$, $j=1,\dots,\ell$,
have the required properties.
\end{proof}

In the case $n=k=1$ one has $\nrm{I}=\sigma(I)=1/2$. This is easy to observe using the
fundamental theorem of calculus, see \eg~\citep{Bou:19}. Note that this observation is consistent with \eqref{E:comparison-of-I-and-L}. For $n\ge 2$ and
$k=1$, the inequality ``$\le$'' in \eqref{E:comparison-of-I-and-L} was shown
in~\cite[Theorem~3.5(ii)]{CP:98}.

\paragraph{Acknowledgment}

We would like to thank David E.~Edmunds and Jan Mal\'y for stimulating
discussions and useful ideas.  We are grateful to the referee for their
careful reading of the manuscript and their valuable comments.

\paragraph{Funding}

This research was partly funded by Czech Science Foundation grant
P201-18-00580S.

\begin{small}

\end{small}

\end{document}